\documentclass{amsart}
\usepackage{amsmath}

\usepackage{ytableau,color}
\usepackage{cite,array}
\usepackage{tikz}
\usepackage[all]{xy}
\usepackage{amssymb, amsmath,amsthm,color,amsfonts, mathrsfs}
\newtheorem{theorem}{Theorem}[section]
\newtheorem{definition}{Definition}[section]
\newtheorem{lemma}{Lemma}[section]
\newtheorem{corollary}{Corollary}[section]
\newtheorem{proposition}{Proposition}[section]
\newtheorem{conjecture}{Conjecture}[section]
\newtheorem{remark}{Remark}[section]
\newtheorem{example}{Example}[section]
\newtheorem{problem}{Problem}[section]

\setlength{\oddsidemargin}{0.2cm}
\setlength{\evensidemargin}{0.2cm}
\setlength{\textwidth}{15.2cm}

\title{Toward quantum Pieri rule for $F\ell_n$ via Seidel representation }
\author{Changzheng Li}
 \address{School of Mathematics, Sun Yat-sen University, Guangzhou 510275, P.R. China}
\email{lichangzh@mail.sysu.edu.cn}

\author{Jiayu Song}
 \address{School of Mathematics, Sun Yat-sen University, Guangzhou 510275, P.R. China}
\email{songjy29@mail2.sysu.edu.cn}


\thanks{This is an English translation of the same paper in Chinese, published at SCIENTIA SINICA Mathematica, 2024, 54(12): 2009-2022.}

\begin{document}

\maketitle

\begin{abstract}
     By using  a ``quantum-to-classical" reduction formula on  the Gromov-Witten invariants of flag vaireities $F\ell_n$, we provide a new proof of the Seidel operator on the quantum cohomology ring $QH^*(F\ell_n)$. Further, we reprove    a quantum Pieri rule with respect to certain special Schubert class for  $QH^*(F\ell_n)$. Finally, we propose a concrete conjecture on the corresponding quantum Pieri rule for the quantum $K$-theory of $F\ell_n$.
\end{abstract}

   \section{
Introduction }

The (big/small) quantum cohomology    $QH^{*}(X)$    of the complex projective manifold    $X$    is a deformation of its classical cohomology ring $H^{*}(X).$ Gromov-Witten invariants of genus 0 are used to define the quantum product of  $QH^{*}(X)$, which virtually compute the  number of rational curves (or pseudo-holomorphic curves, from the perspective of symplectic geometry) that satisfy appropriate  conditions.  The study of    $QH^*(X)$    has been a very popular research field  since the notion of quantum cohomology is introduced.

Classical cohomology    $H^*(\cdot)$    is a contravariant functor. Morphisms between topological spaces  $f:X \rightarrow Y$  naturally induce the ring homorphism $f^{*}: H^*(X) \rightarrow H^*(Y)$. However, quantum cohomology is different from classical cohomology, with the lackness of functoriality in general case.   Therefore, geometric objects have to be studied individually in general. This is one of the important reasons that make the study of quantum cohomology extremely difficult. For some cases, we can still  discuss functoriality of quantum cohomology appropriately. For example, there is a famous \textit{crepant resolution conjecture}:  for    $K$-equivalent smooth projective varieties (or orbifolds, Deligne-Mumford stacks) $Y_{+},Y_{-}$, (that is, there exist birational morphism $f_{\pm}: X\to Y_\pm$ such that $f_{+}^{*}K_{Y_+}\cong f_{-}^{*}K_{Y_-}$,) the corresponding quantum cohomologies $QH^*(Y_-)$, $QH^*(Y_+)$  should be related through the analytic continuation of quantum parameters. This conjecture was first proposed by Yongbin Ruan   \cite{Ruan}, and further developed by Bryan-Graber, Coates-Iritani-Tseng, Iritani and Ruan  \cite{BrGr,CIT09,CR13,Ir09}. The conjecture is a widespread concerning question, for which there are many progress, such as    \cite{CLLZ,LLW,CIJ18,GoWo}.

For the natural projection map between (partial) flag varieties, we can also talk about the functoriality of quantum cohomology appropriately.  Flag  varieties   $G/P$  are a class of projective manifolds with very nice properties, where    $G$    is a connected complex semisimple Lie group and    $P$    is a parabolic subgroup of    $G$. The classical cohomology    $H^*(G/P)$    has a natural    $\mathbb{Z}$-graded algebraic structure. Taking the Borel subgroup    $B\subset P$    of    $G$, we have a natural projection map   $\pi: G/B\to G/P$    from the complete flag  variety $G/B$    to the partial flag  variety  $G/P$. From the Leray-Serre spectral sequence, there is a $\mathbb{Z}^2$-graded algebra isomorphism  $H^*(G/B)\cong H^*(G/P)\otimes H^*(P/B) $. Further, we  take the parabolic subgroup    $P'$    that satisfies    $B\subset P'\subset P$    and obtain the corresponding fiber bundle    $P/B\to P/P'$    as well as a    $\mathbb{Z}^2$-graded algebraic structure on    $H^*(P/B)$. Combining them with the graded structure induced by    $G/B\to G/P$, we establish a    $\mathbb{Z}^3$-graded algebraic structure on    $H^*(G/B)$. In this way, we  at most obtain a    $\mathbb{Z}^{r+1}$-graded algebraic structure on    $H^*(G/B)$, where    $r$    is the semi-simple rank of the Levi subgroup of    $P.$    And the induced morphism    $\pi^*: H^*(G/P)\to H^*(G/B)$    is an injective homomorphism, which can be regarded as part of the isomorphism of this graded algebra (in the form of    $ \{ \alpha\otimes 1^{\otimes^r} \} _{\alpha\in H^*(G/P)}$   ).
In  \cite{LL2010, Li15} , Leung and the first named author   used the Peterson-Woodward comparison formula    \cite{Peterson1997, Woodward2005}    to define a    $\mathbb{Z}^{r+1}$-graded vector space structure on    $QH^*(G/B),$   and further proved that  $QH^*(G/B)$  is a    $\mathbb{Z}^{r+1}$ -filtered algebra under this graded structure. Moreover, its induced    $\mathbb{Z}^{r+1}$-graded algebra (after localization) is isomorphic to the tensor product of    $QH^*(G/P)$    and    $r$    quantum cohomologies of the form    $QH^*(P'/P'')$.  In this way, a quantum version of the Leray-Serre spectral sequence is given.  This graded algebra has very nice applications, especially on the "quantum    $\to$    classical" reduction principle. That is, 3-pointed genus zero Gromov-Witten invariants of $G/P$ with high degree  can be reduced to the classical intersection number    $G/B$   under certain conditions. In this ``quantum    $\to$    classical" principle, we further obtained the applications on quantum Pieri rules    \cite{LiHu15, LL2013} ,  which extended  the quantum Pieri rule of Ciocan-Fontanine    \cite{Ciocan1999}     and the related work of Buch, Kresch and Tamvakis     \cite{Ciocan1999, KT2003, KT2004, BKT2003, BKT2}.

The (quantum) cohomology of the flag varieties  $SL(n,\mathbb{C})/P$    has a  canonical additive basis of Schubert classes   $\sigma^u$. In the quantum product of Schubert classes,
   $$  \sigma^{u}\star \sigma^{v}=\sum_{\lambda_{P}, w}N_{u,v}^{w,\lambda_{P}}q_{\lambda_{P}} \sigma^{w},
  $$    the Schubert structure constant    $N_{u,v}^{w,\lambda_{P}}$    is a genus 0, 3-pointed Gromov-Witten invariant of    $G/P$    with an enumerative meaning. In particular, it is a non-negative integer. When    $P$    is a maximal parabolic subgroup,    $SL(n,\mathbb{C})/P=Gr(k, n)= \{ V\leq \mathbb{C}^n\mid \dim V=k \} $    is called a complex Grassmannian. The corresponding Schubert class can be labeled by a partition.    $\sigma^{u}=\sigma^{\mu}$ , where the partition    $\mu=(\mu_1, \cdots, \mu_k)=(u(k)-k, \cdots, u(2)-2, u(1)-1)\in \mathbb{Z}^k$    satisfies    $n-k\geq \mu_1\geq\cdots\geq \mu_k\geq 0$. We usually abbreviate the special partitions    $p=(p, 0, \cdots, 0)$ ,    $1^m=(1, \cdots, 1, 0, \cdots 0)$    (    $m$  copies of 1). These two special partitions are equivalent in the sense of    $Gr(k, n)\cong Gr(n-k, n)$.   The multiplication formula      $\sigma^p\star \sigma^\nu$  is called the quantum Pieri rule, which was first given by Bertram    \cite{Bertram1997}. The Seidel operator    \cite{Seid}       $\sigma^{1^k}\star$    generates a cyclic  group  $\mathbb{Z}/n\mathbb{Z}$ action on    $QH^*(Gr(k, n))$
   \cite{Belk, Post}, and then Belkale provided a new proof of the quantum Pieri rule using this group action. This approach is also directly generalized to the quantum $K$-theory for  Grassmannians  \cite{LLSY, BCP23}. For the quantum cohomology of flag varieties  $G/P$ of general Lie-type, the corresponding Seidel operator was studied in  \cite{CMP}. In this paper, we will follow this idea to re-study the quantum Pieri rule of the quantum cohomology    $QH^*(F\ell_n)$    of the complete flag variety  $F\ell_n=SL(n, \mathbb{C})/B$. That is, we hope to show
 {    \upshape       $$\mbox{ Quantum Pieri rule} = \mbox{ classical Pieri rule } + \mbox{ Seidel operator action.}$$      }  To be more precise, we consider the Schubert class $\sigma^{s_1s_2\cdots s_{n-1}}$ of $H^*(F\ell_n)$, which is the image of the special Schbuert class  $\sigma^{(1, \cdots, 1)}$ in    $H^*(Gr(n-1, n))$  of the natural monomorphism  $H^*(Gr(n-1, n))  \longrightarrow  H^*(F\ell_n) $. Here  $s_i=(i, i+1)$    is a transposition of the permutation group    $S_n$. We use the "quantum    $\to$    classical" reduction principle to give a precise characterization of quantum product with a Seidel operator    $\mathcal{T}$    of    $QH^*(F\ell_n)$    in \textbf{Theorem    \ref{Seidelelement}}, where  $\mathcal{T}$ is defined by
   $$\mathcal{T}(\sigma^u):=\sigma^{s_1s_2\cdots s_{n-1}}\star \sigma^u.$$     Combining it with the classical Pieri rule    \cite{Sottile}, we re-prove the quantum Pieri rule with respect to the aforementioned special  Schubert class in \textbf{Theorem    \ref{thmQPR}}.  We will define    $u\uparrow i:=(s_1s_2\cdots s_{n-1})^iu$    and    $\lambda(u), \lambda(u, i)$    in Section 3.1. Using these notations, Theorem    \ref{Seidelelement}    and Theorem    \ref{thmQPR}    can be combined and described as follows.
   \begin{theorem}   \label{mainthmQH}    Let    $1\leq m\leq n-1$    and    $u \in S_{n}$, Note    $k:=n-u(n)$. In    $QH^*(F\ell_{n})$, we have
    \begin{align}
      \mathcal{T}(\sigma^u)&=q_{\lambda(u)} \sigma^{u\uparrow 1} \\
         \sigma^{s_{n-m}  \cdots s_{n-2}s_{n-1}}* \sigma^{u}&=q_1^{-1}q_{2}^{-2}\cdots q_{n-1}^{1-n}q_{\lambda(u, k)}\mathcal{T}^{n-k}(\sigma^{s_{n-m}  \cdots s_{n-2}s_{n-1}}\cup \sigma^{u\uparrow k}).
     \end{align}     \end{theorem}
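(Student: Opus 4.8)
The plan is to establish the two identities in turn, treating the first (the Seidel formula) as the substantive step and the second (the quantum Pieri rule) as a formal consequence of it. For the first, I would expand the quantum product into the Schubert basis,
\[
\mathcal{T}(\sigma^u)=\sigma^{s_1s_2\cdots s_{n-1}}\star\sigma^u=\sum_{w\in S_n}\sum_{\lambda}\big\langle\sigma^{s_1s_2\cdots s_{n-1}},\sigma^u,\sigma^{w^{\vee}}\big\rangle_{\lambda}\,q_{\lambda}\,\sigma^w ,
\]
where $w^{\vee}=w_0w$ and the inner sum runs over effective degrees. The geometric input is that $s_1s_2\cdots s_{n-1}$ is the long cycle $j\mapsto j+1\ (\mathrm{mod}\ n)$, whose Schubert class is the pullback $\pi^{*}(\mathrm{pt})$ of the point class of $\mathbb{P}^{n-1}\cong Gr(n-1,n)$ along the projection $\pi\colon F\ell_n\to Gr(n-1,n)$, $V_{\bullet}\mapsto V_{n-1}$ (and, more generally, $\sigma^{s_{n-m}\cdots s_{n-1}}=\pi^{*}$ of the codimension-$m$ Schubert class of $\mathbb{P}^{n-1}$). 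The plan is to feed each invariant above into the ``quantum $\to$ classical'' reduction principle of \cite{LL2010,Li15,LL2013,LiHu15}, which rests on the Peterson--Woodward comparison formula \cite{Peterson1997,Woodward2005}: because one of the three insertions is pulled back from the point class of a projective space, the hypotheses of that principle hold for every contributing $\lambda$, and each such invariant is identified with a genuine degree-zero intersection number of suitably modified Schubert classes.

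Carrying this out, intersecting against a cycle pulled back from a single point of $\mathbb{P}^{n-1}$ collapses the sum to one term: the classical count forces $\sigma^w=\sigma^{u\uparrow 1}$ (the shift by the long cycle) and pins down a unique effective degree, determined by the position of the flag $u$ over $Gr(n-1,n)$ — encoded in $k=n-u(n)$ — which the Peterson--Woodward dictionary refines to the $\mathbb{Z}^{n-1}$-degree $\lambda(u)$, the count being $1$ in that case. This yields $\mathcal{T}(\sigma^u)=q_{\lambda(u)}\sigma^{u\uparrow 1}$, which is the first identity.

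For the second identity, iterate the first to get $\mathcal{T}^{j}(\sigma^u)=q_{\lambda(u,j)}\sigma^{u\uparrow j}$ with $\lambda(u,j)=\sum_{i=0}^{j-1}\lambda(u\uparrow i)$. Since the long cycle has order $n$, applying this with $j=n$, for instance to $\sigma^{\mathrm{id}}=1$ so that $(\sigma^{s_1s_2\cdots s_{n-1}})^{\star n}=q_{\lambda(\mathrm{id},n)}$, shows that $\lambda(u,n)$ is independent of $u$; evaluating the explicit $\lambda(\cdot)$ gives $q_{\lambda(u,n)}=q_1q_2^2\cdots q_{n-1}^{n-1}$, i.e. $\mathcal{T}^{n}=q_1q_2^2\cdots q_{n-1}^{n-1}\cdot\mathrm{id}$ on the localization $QH^{*}(F\ell_n)[q_1^{-1},\dots,q_{n-1}^{-1}]$, whence $\mathcal{T}^{-k}=q_1^{-1}q_2^{-2}\cdots q_{n-1}^{1-n}\,\mathcal{T}^{\,n-k}$. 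As $\mathcal{T}$ is itself a quantum multiplication it commutes with $\star$, so solving the first identity for $\sigma^u=q_{\lambda(u,k)}\mathcal{T}^{-k}(\sigma^{u\uparrow k})$ gives
\[
\sigma^{s_{n-m}\cdots s_{n-1}}\star\sigma^u=q_{\lambda(u,k)}\,\mathcal{T}^{-k}\!\big(\sigma^{s_{n-m}\cdots s_{n-1}}\star\sigma^{u\uparrow k}\big).
\]
By the choice $k=n-u(n)$ the permutation $u\uparrow k$ fixes $n$ and hence lies in the parabolic subgroup $W_P=\langle s_1,\dots,s_{n-2}\rangle$ with $Gr(n-1,n)=SL(n,\mathbb{C})/P$; for such a factor $\sigma^{s_{n-m}\cdots s_{n-1}}\star\sigma^{u\uparrow k}$ carries no quantum correction — once more by the reduction principle, since the three-pointed invariants of positive degree all vanish — so it equals the classical product $\sigma^{s_{n-m}\cdots s_{n-1}}\cup\sigma^{u\uparrow k}$, whose Schubert expansion is the classical Pieri rule of \cite{Sottile}. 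Substituting $\mathcal{T}^{-k}=q_1^{-1}q_2^{-2}\cdots q_{n-1}^{1-n}\mathcal{T}^{\,n-k}$ into the displayed equation then produces exactly the second identity.

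The main obstacle is the first identity; the arithmetic deducing the second one from it is routine. Beyond that book-keeping, the real work is twofold: (i) verifying that the ``quantum $\to$ classical'' reduction genuinely applies to $\langle\sigma^{s_1s_2\cdots s_{n-1}},\sigma^u,\sigma^{w^{\vee}}\rangle_{\lambda}$ for every effective $\lambda$, which is precisely the step that uses that $\sigma^{s_1s_2\cdots s_{n-1}}$ is pulled back from the point class of $\mathbb{P}^{n-1}$; and (ii) the combinatorial identification, through the Peterson--Woodward comparison formula, of the unique surviving pair — that the permutation part is $u\uparrow 1$ and that the degree part is precisely the effective class $\lambda(u)$ (and cumulatively $\lambda(u,k)$, $\lambda(u,n)$). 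A secondary point requiring care is the vanishing of the quantum corrections in $\sigma^{s_{n-m}\cdots s_{n-1}}\star\sigma^{u\uparrow k}$ for $u\uparrow k\in W_P$, which is again settled by the reduction principle. This approach mirrors, at the level of $F\ell_n$, Belkale's rotation argument for $Gr(k,n)$ \cite{Belk} and the study of the Seidel operator for general $G/P$ in \cite{CMP}.
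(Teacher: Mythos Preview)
Your proposal is correct and follows essentially the same approach as the paper: the first identity is obtained via the ``quantum $\to$ classical'' reduction (the paper packages this as Lemma~\ref{lemmaqterm} constraining the possible degrees, Lemma~\ref{lemmacupprod} computing the classical cup product, and then Proposition~\ref{quantum-classical} to pin down the unique surviving term), and the second identity is deduced exactly as you outline, using $\mathcal{T}^n=q_1q_2^2\cdots q_{n-1}^{n-1}$ together with Lemma~\ref{lemmaqterm} to replace $\star$ by $\cup$ when $(u\uparrow k)(n)=n$. One small terminological point: the actual reduction step the paper invokes is Proposition~\ref{quantum-classical} (from \cite{LL2012}), which is a \emph{consequence} of the filtration built from Peterson--Woodward rather than the comparison formula itself; your write-up should point to that result rather than to Peterson--Woodward directly.
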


It is our main motivation to the study of  quantum Pieri rule at the quantum    $K$-theoretical level, by  interpreting  the quantum Pieri rule of
$QH^*(F\ell_n)$ in the above way.  In section 3.3, we propose \textbf{Conjecture \ref{conjQK}}. That is, we
should be able to obtain the quantum $K$-theoretical one, by simply replacing the Schubert cohomology class ``$\sigma$" in Theorem    \ref{mainthmQH}    with the Schubert class ``$\mathcal{O}$" in   the $K$-theory $K(F\ell_n)$. Namely the study of the corresponding  quantum Pieri rule for the  quantum $K$-theory $QK(F\ell_n)$ should be reduced to the  Pieri rule for the   $K$-theory  $K(F\ell_n)$ obtained by  Lenart-Sottile   \cite{LeSo}. This could help us to understand the general quantum Pieri rule    \cite{NaSa}    on  $QK(F\ell_n)$. Moreover, the quantum $K$-theory of the flag varieties  $SL(n, \mathbb{C})/P$  admits  functoriality induced by  the natural projection map between flag varieties \cite{BCLM, Kato}.  With the help of  the induced surjective algebra homomorphism, we can understand the quantum Pieri rule of special Schubert class index by $s_is_{i+1}\cdots s_{j}$ with general Schubert class in quantum $K$-theory of non-complete flag varieties  $SL(n, \mathbb{C})/P (P \neq B)$.  We provide Example \ref{exevidence} for $F\ell_6$ and a  Pieri-type product of for the   quantum $K$-theory of $Gr(3,6)$  induced by Conjecture \ref{conjQK}, which is consistent with the quantum Pieri rule    \cite{BuMi}  obtained by Buch-Mihalcea. This provides an evidence for our Conjecture    \ref{conjQK}.

\subsection*{Acknowledgement} The authors are supported in part by the National Key Research and Development Program of China No.~2023YFA100980001 and NSFC  12271529.

   \section{
Functoriality of quantum cohomology of flag varieties  }    In this section, we briefly review the functoriality of quantum cohomology of  flag varities  in the       series of work \cite{LL2010,LL2012,Li15}   and its application on the reduction of ``quantum    $\to$    classical". On the one hand, our statement  will only focus on  flag varieties of type  $A_{n-1}$, which is very  concrete. On the other hand, in this section we will use the standard notation in Lie theory to indicate that the corresponding results hold true for all Lie types.

   \subsection{
Notations  }    We introduce commonly used notations in Lie theory. For more details, please refer to  \cite{Hump75}. Consider complex simple Lie group $G=SL(n,\mathbb{C})$. Let $B$ be the standard Borel subgroup of $G$, consisting of upper-triangular matrices, and  $P $ be a parabolic subgroup of $G$ containing $B$.
Denote  by $\mathfrak{h}$    the Lie algebra of the Lie subgroup    $T$  that consists of   the diagonal matrices of    $G$. Let   $\Delta= \{ \alpha_1,\alpha_2,\cdots, \alpha_{n-1} \}  \subset \mathfrak{h}^{*}$    be the standard simple  roots, and $\{\alpha_1^{\vee},\alpha_2^{\vee},\cdots, \alpha_{n-1}^{\vee}\} \subset \mathfrak{h}$ be the simple coroots. Denote by $
\langle \cdot,\cdot \rangle: \mathfrak{h}^* \times \mathfrak{h} \longrightarrow \mathbb{C} $   the natural pairing.  Let $Q^{\vee}=\oplus_{i=1}^{n}\mathbb{Z}\alpha_i^{\vee}$  and $\rho=\sum_{i=1}^{n}\chi_{i} \in \mathfrak{h}^{*}$, where $\chi_{i}$ are the fundamental weights satisfying $\langle \chi_i, \alpha_j^{\vee}\rangle=\delta_{i,j}$.
 The Weyl group     $W$    of    $G$    is generated by the simple reflection    $ \{ s_i:=s_{\alpha_i}\mid 1\leq i\leq n-1 \} $    and is isomorphic to the permutation group    $S_n$.  Here the simple reflection  $s_i: \mathfrak{h}^*\to \mathfrak{h}^*; s_i(\beta)=\beta-\langle\beta, \alpha_{i}^{\vee} \rangle\alpha_{i}$ corresponds to
 transposition $(i, i+1)$ in $S_n$. We freely interchange $s_i$ and $(i, i+1)$ whenever there is no confusion. There is a standard length function (with respect to the generators    $\{ s_i \} _i$) on the Weyl group, denoted a
 s $\ell: W \rightarrow \mathbb{Z}_{\geq 0}$.
 The parabolic subgroup $P\supset B$ corresponds to a unique subset   $\Delta_P=\Delta\setminus\{\alpha_{n_1}, \cdots, \alpha_{n_k}\}$  of $\Delta$, where $1\leq n_1<\cdots<n_k\leq n-1$.

    \begin{enumerate}   \item    [1)]
    $\Delta_B=\emptyset$. Let    $P_{\alpha_i}$    be the parabolic subgroup corresponding to the subset    $ \{ \alpha_i \} $, then    $$P_{\alpha_i}= \{ (g_{ab})\in SL(n, \mathbb{C})\mid g_{ab}=0, \mbox{ if } a>b \mbox{ and } (a, b)\neq (i+1, i) \}.$$

    \item    [2)]    $r:=|\Delta_P|$    is the semisimple rank of the Levi subgroup of    $P$.

    \item    [3)] The root system   $R$    can be obtained from the action of the Weyl group  on the set   $\triangle$ of simple roots:  $R=W\cdot \triangle=R^{+}\sqcup(-R^{+})$, where \ $R^{+}=R \cap \oplus_{i=1}^{n-1} \mathbb{Z}_{\geq 0}\alpha_{i}$ is the set of positive roots  respect to $\triangle$. Denote  $R^{+}_P:=R \cap \oplus_{\alpha\in \Delta_P} \mathbb{Z}_{\geq 0}\alpha$, $Q_{P}^{\vee}:=\oplus_{\alpha\in \Delta_{P}}\mathbb{Z}\alpha^{\vee}$.

    \item    [4)] Let    $W_{P}$    be the Weyl subgroup generated by    $ \{ s_{\alpha}|\alpha\in \Delta_{P} \} $, Then    $W^{P}:= \{ w\in W| \ell(w) \leq \ell(v), \forall v\in wW_{P} \} \subset W$    is the set of minimum length representatives of    $W/W_P$. Denote   $n_0:=0, n_{k+1}:=n$. As a subset of $S_{n}$, we have
    $$W^P= \{ w\in S_n\mid w(n_{i-1}+1)<w(n_{i-1}+2)<\cdots <w(n_i), \quad\forall 1\leq i\leq k+1 \}.$$
    \item    [5)]   There is a unique longest element in  $W$ (resp. $W_P$), denoted as $w_0$ (resp. $w_P$). As a permutation of $S_n$, $w_0(j)=n+1-j$, $\forall 1\leq j\leq n$.

     \item[6)] $W\cong N(T)/T$, here $N(T)\leq G$ is the normalizer of $T$ in $G$. For $u\in W$, we denote by $\dot u\in N(T)$   a representative of the corresponding coset set in $N(T)/T$  under this standard isomorphism.
     \end{enumerate}

   \subsection{
Quantum cohomology}    The content of this section mainly follows from   \cite{FP1997, FW2004}. Flag varieties   $G/P$   parameterize  partial  flags of specific type  in    $\mathbb{C}^n$:
    $G/P= \{ V_{n_1}\leq \cdots \leq V_{n_k}\leq \mathbb{C}^n\mid \dim V_{n_j}=n_j, \forall ~j \} =:F\ell_{n_1, \cdots, n_k; n}$    In particular,    $G/B=F\ell_{1, 2, \cdots, n-1; n}=:F\ell_n$    is a complete flag variety. The (opposite) Schubert varieties     $X_u$, $X^u$ in    $G/P$    are respectively defined by
    $$X_u=\overline{B\dot u P/P},\quad X^u=\overline{\dot w_0B\dot w_0\dot uP/P}.$$
     Thus the complex dimension of  $X_u$ (resp.   codimension of  $X^u$) is   $\ell(u)$. The Schubert  (co)homology   classes    $\sigma_u:=[X_u]\in H_{2\ell(u)}(G/P, \mathbb{Z})$    and    $\sigma^u_P:=P.D.[X^u]\in H^{2\ell(u)}(G/P, \mathbb{Z})$    respectively form  an additive bases    \cite{BGG1973}    of the (co)homology of the flag varieties    $X=G/P$:
    $$H_{*}(G/P,\mathbb{Z})=\bigoplus_{u\in W^{P}}\mathbb{Z} \sigma_{u},H^{*}(G/P,\mathbb{Z})=\bigoplus_{u\in W^{P}}\mathbb{Z} \sigma^{u}_P.$$    Note that the natural projection map    $\pi: G/B\to G/P$  induces a  monomorphism
    $$\pi^*: H^*(G/P, \mathbb{Z})\to H^*(G/B, \mathbb{Z}); \pi^*(\sigma^u_P)=\sigma^u_B.$$    Therefore we abbreviate    $\sigma^u_P, \sigma^u_B$    as    $\sigma^u$. Besides, we have the canoincal isomorphism    $H_{2}(G/P,\mathbb{Z})=\bigoplus_{\alpha_{i}\in \Delta - \Delta_{P}}\mathbb{Z} \sigma_{s_{i}} \cong Q^{\vee}/Q_{P}^{\vee}$ of abelian groups.

Given    $\lambda_P\in H_2(G/P, \mathbb{Z})=Q^\vee/Q^\vee_P$, we denote  by  $\overline{\mathcal{M}_{0,3}}(G/P,\lambda_{P})= \{ (f: \mathbb{P}^{1}\to G/P; p_1, p_2, p_3)\mid f_*([\mathbb{P}^{1}])=\lambda_P, f\mbox{ is a stable map}  \} $      the moduli space of 3-pointed, genus-zero stable maps of degree $\lambda_P$. This moduli space is an orbifold, and its dimension is equal to    $\dim G/P + \langle c_{1}(G/P), \lambda_{P}\rangle$. Let    $ev_{i}:\overline{\mathcal{M}_{0,3}}(G/P,\lambda_{P}) \longrightarrow G/P $    be the    $i$-th   evaluation map. For   $\alpha_{n_j}\in \Delta - \Delta_{P}$, we introduce the formal variable  $q_{n_j}$. For    $\lambda_{P}=\sum_{\alpha_{n_j}\in \Delta - \Delta_{P}}b_{j} \alpha_{n_j}^{\vee} +Q_{P}^{\vee}$, we denote    $q_{\lambda_{P}}=\prod_{\alpha_{n_j}\in \Delta - \Delta_{P}} q^{b_{j}}_{n_j}$, The (small) quantum cohomology    $QH^*(G/P)=(H^*(G/P)\otimes \mathbb{C}[q_{n_1}, \cdots, q_{n_k}], \star)$    of the flag variety $G/P$ is defined by
    $$\sigma^{u}\star \sigma^{v}=\sum_{\lambda_{P} \in H_{2}(G/P,\mathbb{Z}),w\in W^{P}}N_{u,v}^{w,\lambda_{P}}q_{\lambda_{P}} \sigma^{w}. $$    Here  the quantum Schubert structure constant    $N_{u,v}^{w,\lambda_{P}}$    is a genus-0, 3-pointed Gromov-Witten invariant of degree    $\lambda_P$    on    $G/P$, given by the following integral
    $$N_{u,v}^{w,\lambda_{P}}=\int_{\overline{\mathcal{M}_{0,3}}(G/P,\lambda_{P})}ev_1^{*}(\sigma^{u}) \cup ev^{*}_{2}(\sigma^{v}) \cup ev^{*}_{3}(\sigma^{w^{\vee}}),$$    where    $w^\vee:=w_0ww_P\in W^P$. Geometrically, for    $g, g'\in G$    in general position, we have
   \begin{align}\label{positivity}
   N_{u, v}^{w, \lambda_P}=\sharp\{f:\mathbb{P}^1\to G/P\mid f_*([\mathbb{P}^1])=\lambda_P, f(0)\in X^u, f(1)\in g X^v, f(\infty)\in g'X^{w^\vee}\}\in \mathbb{Z}_{\geq 0}\mbox{. }
  \end{align}
   By the  definition of the moduli space  and its dimension formula, we have
    \begin{equation}\label{degreeaxiom}
     N_{u,v}^{w,\lambda_{P}}=0 ~\text{ unless}~ \ell(u)+\ell(v) = \ell(w)+ \langle  c_{1}(G/P), \lambda_{P}\rangle \mbox{ and }\lambda_P\geq \boldsymbol{0}\mbox{. }
   \end{equation}    Here    $\lambda_P\geq \mbox{0}$    means    $b_j\geq 0$ ,    $\forall~ 1\leq j\leq k$. Since    $c_1(G/P)>0$ , the right side of the quantum product is a finite sum. Therefore,
    \begin{enumerate}   \item    [1)]    $QH^*(G/P)=\bigoplus \mathbb{C}q_{\lambda_P}\sigma^w$    is a     $\mathbb{Z}$-graded algebra, where the    $\mathbb{Z}$-graded structure is naturally given by the degree of the basis    $\{q_{\lambda_P}\sigma^w\}$   :
    $$\deg q_{\lambda_P}\sigma^w=\ell(w)+ \langle  c_{1}(G/P), \lambda_{P}\rangle. $$
In particular  for    $G/B$, we have    $\langle  c_{1}(G/B), \lambda\rangle=\langle 2\rho, \lambda\rangle$, where    $\lambda\in H_2(G/B, \mathbb{Z})=Q^\vee$.
    \item    [2)]    $\sigma^{u}\cup \sigma^{v}=\sigma^{u}\star \sigma^{v} |_{\boldsymbol{q}=\boldsymbol{0}}$. That is, the quantum cohomology    $QH^*(G/P)$    is a deformation of the classical cohomology    $H^*(G/P)$. \end{enumerate}

   \subsection{Functoriality of quantum cohomology  }
   In this section, we briefly explain the    $\mathbb{Z}^2$-graded algebra structure on    $QH^*(G/B)$    introduced by    \cite{LL2010,LL2012,Li15}    for the special case of natural projection map  $\pi: G/B\to G/P_\alpha$    ($\alpha\in \Delta$) (note that    $r=1$). This structure can be regarded as  an induced ``morphism" at the quantum cohomology level from $\pi$, so we roughly call it ``functoriality".

Note that when    $\alpha=\alpha_i$, the projection map    $\pi$    is the natural forgetful  map    $F\ell_n=G/B\to G/P_{\alpha_i}=F\ell_{1,2, \cdots, i-1, i+1, \cdots, n-1; n}$, which is a fiber bundle with fiber $P_{\alpha_i}/B\cong \mathbb{P}^1$. As vector spaces, $QH^*(G/B)=H^*(G/B)\otimes \mathbb{C}[q_1, \cdots, q_{n-1}]$. We consider  the basis $\{q_{\lambda}\sigma^{w}|(w,\lambda)\in W \times Q^{\vee}\}$ of  the localization  $QH^{*}(G/B)[q_{1}^{-1}, q_{2}^{-1}, \cdots, q_{n-1}^{-1}]$ of $QH^*(G/B)$, and introduce a map $\mbox{sgn}_{\alpha}$ with respect to a  given simple root $\alpha \in \triangle$:
    $$\mbox{sgn}_{\alpha}: W \longrightarrow  \{ 0,1 \} ; \quad \mbox{sgn}_{\alpha}(w)=\left \{
\begin{aligned}
1,~  \ell(w)-\ell(ws_{\alpha}) > 0, \\
0,~  \ell(w)-\ell(ws_{\alpha}) \leq 0.
\end{aligned}
\right. $$    Note that    $\ell(w)-\ell(ws_{\alpha})=\pm 1$ , and    $\ell(w)-\ell(ws_{\alpha})=1$  holds  if and only if     $u:=ws_\alpha \in W^{P_{\alpha}}$. Then we can define a   $\mathbb{Z}^2$-grading map with respect to $\alpha$ as follows:
    $$gr_{\alpha}: W \times Q^{\vee} \longrightarrow \mathbb{Z}^{2};$$
    $$gr_{\alpha}(q_{\lambda} \sigma^{w})=(\mbox{sgn}_{\alpha}(w)+\langle\alpha, \lambda\rangle, \ell(w)+\langle 2\rho, \lambda \rangle-\mbox{sgn}_{\alpha}(w)- \langle\alpha, \lambda\rangle ).$$
    We notice
    \begin{enumerate}   \item    [a)]    $q_{\lambda} \sigma^{w}\in QH^{*}(G/B)$    (resp.   $QH^{*}(G/B)[q_i^{-1}]$) if and only if    $q_\lambda\in \mathbb{C}[q_1, \cdots, q_{n-1}]$    (resp.  $\mathbb{C}[q_1, \cdots, q_{n-1}][q_i^{-1}]$).
    \item    [b)] Denote    $gr_{\alpha}(q_{\lambda} \sigma^{w})=(a, b)$, then we have    $\deg(q_{\lambda} \sigma^{w})=a+b$. Therefore    $gr_\alpha$    is actually  a  $\mathbb{Z}^2$-graded refinement    of     the  $\mathbb{Z}$-graded structure    of    $QH^*(G/B)$    in Section 2.2.  \end{enumerate}

We use the  lexicographical order on    $\mathbb{Z}^{2}$. That is,    $ a =(a_1,a_2) < b =(b_1,b_2)$    if and only if   either $a_{1} < b_1$    or ($a_1=b_1$ and $a_2 < b_2$) holds. Then we define $$F_{\textbf{a}}:=\bigoplus\limits_{gr_{\alpha}(q_{\lambda} \sigma^{w}) \leq \textbf{a}} \mathbb{C}q_{\lambda} \sigma^{w} \subset QH^{*}(G/B),~ F_{\textbf{a}}':=\bigoplus\limits_{gr_{\alpha}(q_{\lambda} \sigma^{w}) \leq \textbf{a}} \mathbb{C}q_{\lambda} \sigma^{w} \subset QH^{*}(G/B)[q_{\alpha^\vee}^{-1}].$$
  As a consequence, we obtain a family  $\mathcal{F}=\{F_{\textbf{a}}\}_{\textbf{a}\in \mathbb{Z}^{2}}$ of vector subspaces of $QH^{*}(G/B)$ , and a family $\mathcal{F}'$ of vector subspaces of  localization $QH^{*}(G/B)[q_{\alpha^\vee}^{-1}]$  of $QH^{*}(G/B)$  (where  $\mathcal{F}'$ can be regarded as the natural extension of $\mathcal{F}$).
 Their associated  $\mathbb{Z}^2$-graded vector spaces are    $$ Gr^{\mathcal{F}}(QH^{*}(G/B))=\bigoplus_{ a \in \mathbb{Z}^{2}}Gr_{ a }^{\mathcal{F}},\qquad Gr^{\mathcal{F}'}(QH^{*}(G/B)[q_{\alpha^\vee}^{-1}])=\bigoplus_{ a \in \mathbb{Z}^{2}}Gr_{ a }^{\mathcal{F}'}$$
 respectively, where    $Gr_{ a }^{\mathcal{F}}:=F_{ a }/\cup_{ b  < a }F_{ b }$  and   $Gr_{\textbf{a}}^{\mathcal{F}'}:=F_{\textbf{a}}'/\cup_{\textbf{b} <\textbf{a}}F_{\textbf{b}}'$.
 The definition of the map $gr_\alpha$ is inspired by the following Peterson-Woodward comparison formula, which  is proposed by Peterson in his unpublished work
 \cite{Peterson1997}, and proved by Woodward \cite{Woodward2005} later. This formula  reduces the computation of the Gromov-Witten invariant  $N_{u,v}^{w,\lambda_{P}}$ of any (partial) flag variety $G/P$  to the computation of the corresponding Gromov-Witten invariant $N_{u,v}^{ww_{P}w_{P^{'}},\lambda_{B}}$ of the complete flag variety $G/B$. In general, the corresponding    $\mathbb{Z}^{r+1}$-graded map  is more complicated, for which we refer to    \cite{LL2010}  for the details.

    \begin{proposition}[Peterson-Woodward comparision formula]

    (1) Let    $\lambda_{P} \in Q^{\vee}/Q_{P}^{\vee},$    then there is a unique    $\lambda_{B}\in Q^{\vee}$    such that    $\lambda_{P}=\lambda_{B}+Q_{P}^{\vee}$    and for all    $\gamma \in R_{P}^{+}$ ,     $\langle  \gamma , \lambda_{B} \rangle \in  \{ 0,-1 \} $.

 (2) For any   $u,v,w\in W^{P}$, we have
    $$N_{u,v}^{w,\lambda_{P}}=N_{u,v}^{ww_{P}w_{P^{'}},\lambda_{B}}, $$    where    $P'$    is the parabolic subgroup corresponding to the subset    $\Delta_{P^{'}}= \{ \beta \in \triangle_{P}| \langle \beta, \lambda_{B} \rangle=0 \} $.  \end{proposition}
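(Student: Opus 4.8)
The plan is to prove the two parts by different means: (1) is a lattice-theoretic statement about the coweight lattice of the Levi, while (2) is a geometric comparison carried out by lifting stable maps along $\pi\colon G/B\to G/P$, the key input being a rigidity property of lifts that is forced precisely by the normalization in~(1).

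\emph{Part (1).} I would first observe that for $\lambda_{B}\in Q^{\vee}$, the condition ``$\langle\gamma,\lambda_{B}\rangle\in\{0,-1\}$ for all $\gamma\in R_{P}^{+}$'' is equivalent to the conjunction of: $\langle\alpha,\lambda_{B}\rangle\le 0$ for every $\alpha\in\Delta_{P}$, and $\langle\theta^{(c)},\lambda_{B}\rangle\ge -1$ for the highest root $\theta^{(c)}$ of each irreducible component of $R_{P}$. Indeed, if $\gamma\in R_{P}^{+}$ lies in component $c$ then $\theta^{(c)}-\gamma$ is a nonnegative combination of simple roots in $\Delta_{P}$, whence $\langle\theta^{(c)},\lambda_{B}\rangle\le\langle\gamma,\lambda_{B}\rangle\le 0$, forcing $\langle\gamma,\lambda_{B}\rangle\in\{0,-1\}$; the converse is immediate by specializing $\gamma$. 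Now $W_{P}$ acts trivially on $Q^{\vee}/Q_{P}^{\vee}$, and the component of $\lambda_{B}$ in the centre $\mathfrak{z}_{P}=\{h\in\mathfrak{h}\mid\langle\alpha,h\rangle=0,\ \forall\alpha\in\Delta_{P}\}$ is both unconstrained by the conditions above and constant along the coset; so the claim reduces to the semisimple part, namely: \emph{inside the Levi coweight lattice $P_{P}^{\vee}$, each $Q_{P}^{\vee}$-coset contains exactly one coweight that is $\Delta_{P}$-dominant and of level $\le 1$ on every component.} The $\Delta_{P}$-dominant coweights of level $\le 1$ on a simple factor are exactly $0$ together with the fundamental coweights $\varpi_{i}^{\vee}$ for which $\alpha_{i}$ occurs with coefficient $1$ in the highest root of that factor; a short type-by-type check shows that these form a complete system of representatives for $P_{P}^{\vee}/Q_{P}^{\vee}$ on each factor, which yields existence and uniqueness simultaneously.

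\emph{Part (2).} Let $\bar\pi\colon\overline{\mathcal{M}_{0,3}}(G/B,\lambda_{B})\to\overline{\mathcal{M}_{0,3}}(G/P,\lambda_{P})$ be the morphism induced by $\pi$ (note $\pi_{*}\lambda_{B}=\lambda_{P}$), so that $\pi\circ ev_{i}^{B}=ev_{i}^{P}\circ\bar\pi$ for the evaluation maps. The geometric heart is the claim that a generic stable map $f\colon\mathbb{P}^{1}\to G/P$ of class $\lambda_{P}$ admits a \emph{unique} lift to a stable map of class exactly $\lambda_{B}$: pulling back along $f$ produces a $P/B$-bundle over the domain curve, whose sections have classes lying over $\lambda_{P}$, and the condition $\langle\gamma,\lambda_{B}\rangle\in\{0,-1\}$ is precisely what singles out the class-$\lambda_{B}$ section and makes it rigid, with the subsystem $\Delta_{P'}=\{\beta\in\Delta_{P}\mid\langle\beta,\lambda_{B}\rangle=0\}$ recording the directions in which this section is ``horizontal'' and $\Delta_{P}\setminus\Delta_{P'}$ those in which it wraps once. (When $\lambda_{B}$ is not effective this degenerates to the vacuous identity $0=0$, which the argument must also accommodate.) Granting this, I would argue with the enumerative description~\eqref{positivity}. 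For $u\in W^{P}$ one has $X^{u}_{G/B}=\pi^{-1}(X^{u}_{G/P})$ (both irreducible of the same dimension, the first contained in the second), so a class-$\lambda_{B}$ curve $D\subset G/B$ meets $X^{u}_{G/B}$ if and only if $\pi(D)$ meets $X^{u}_{G/P}$, and likewise for the general translates $gX^{v}_{G/B}$. For the third condition put $z:=(ww_{P}w_{P'})^{\vee}$; since $z=w^{\vee}w_{P'}w_{P}$ with $w_{P'}w_{P}\in W_{P}$ and $w^{\vee}\in W^{P}$, one gets $\pi(X^{z}_{G/B})=X^{w^{\vee}}_{G/P}$, and the point of the twist $w_{P}w_{P'}$ is exactly that the unique lift of a curve meeting $X^{w^{\vee}}_{G/P}$ automatically passes through the smaller variety $X^{z}_{G/B}$ in the relevant fibre of $P/B$, and through no competing Schubert variety. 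Hence $f\mapsto$(its lift) is a bijection between the finite set of class-$\lambda_{P}$ curves through $X^{u}_{G/P},gX^{v}_{G/P},g'X^{w^{\vee}}_{G/P}$ and the finite set of class-$\lambda_{B}$ curves through $X^{u}_{G/B},gX^{v}_{G/B},g'X^{z}_{G/B}$, giving $N_{u,v}^{w,\lambda_{P}}=N_{u,v}^{ww_{P}w_{P'},\lambda_{B}}$. Equivalently, in cohomology this follows from $ev_{i}^{B*}\sigma^{u}=\bar\pi^{*}ev_{i}^{P*}\sigma^{u}$, the projection formula, and the identification $\bar\pi_{*}\bigl(ev_{3}^{B*}\sigma^{z}\bigr)=ev_{3}^{P*}\sigma^{w^{\vee}}$.

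\emph{Main obstacle.} The hard step is the rigidity claim in Part~(2): proving that the lift of a generic class-$\lambda_{P}$ stable map to class $\lambda_{B}$ exists and is unique, and then pinning down exactly which fibrewise Schubert condition this lift satisfies so that the Weyl group element $w_{P}w_{P'}$ emerges. This requires a careful analysis of sections of flag bundles over nodal rational curves and of their degrees, and it is here — not in any ``naive'' effective lift of $\lambda_{P}$ — that the $\{0,-1\}$ normalization of Part~(1) becomes indispensable.
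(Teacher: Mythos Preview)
The paper does not prove this proposition. It is quoted as the ``Peterson--Woodward comparison formula,'' attributed to Peterson's unpublished lecture notes \cite{Peterson1997} and to Woodward's published proof \cite{Woodward2005}, and used as a black box (to define $\psi_\alpha$ and to motivate the grading $gr_\alpha$). So there is no in-paper argument to compare your proposal against.

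That said, your outline is sound and is in the spirit of Woodward's original approach. Your Part~(1) is essentially complete: the reformulation via antidominance on $\Delta_P$ together with the level-$1$ bound on each highest root is correct, and the reduction to ``minuscule coweights (including $0$) represent $P^\vee/Q^\vee$ on each simple factor of the Levi'' is the standard way to finish. One small wording issue: what you call the ``component of $\lambda_B$ in $\mathfrak{z}_P$'' is constant on the coset but need not lie in $Q^\vee$; the honest statement is that the projection of $\lambda_B$ to $\mathfrak{h}_{ss}=\mathrm{span}\{\alpha^\vee:\alpha\in\Delta_P\}$ lands in the Levi coweight lattice $P_P^\vee$ (since its pairings with $\Delta_P$ are integers), which is all you need.

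For Part~(2) you have correctly isolated the geometric mechanism and the genuine difficulty. Two places deserve more care than the sketch suggests. First, the ``unique lift'' assertion is not just existence of a degree-$\lambda_B$ section of the pulled-back $P/B$-bundle but rather that the corresponding component of the space of lifts is birational onto the base moduli space; the $\{0,-1\}$ condition is what kills infinitesimal deformations of the section, and this has to be checked over nodal domains as well. Second, for the third marked point your identity $\pi(X^{z}_{G/B})=X^{w^\vee}_{G/P}$ is fine, but the real claim is fibrewise: over a generic point of $X^{w^\vee}_{G/P}$, the lift lands in the Schubert cell of $P/B$ indexed by $w_{P'}w_P$ (equivalently, the unique section has ``horizontal'' directions exactly along $\Delta_{P'}$), and this is what forces the twist $w_Pw_{P'}$ rather than any other element of $W_P$. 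You flag this as the main obstacle, which is accurate; filling it in is essentially reproducing Woodward's argument.
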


According to the Peterson-Woodward comparison formula, we get an injection map between complex vector spaces:
    $$\psi_{\alpha}:QH^{*}(G/P_\alpha) \longrightarrow QH^{*}(G/B),$$
    $$ q_{\lambda_{P_\alpha}}\sigma^{w} \longmapsto q_{\lambda_{B}}\sigma^{ww_{P}w_{P^{'}}}.$$

   \begin{proposition}[  Theorem 1.2 of \cite{LL2010}]   \label{propfilteralgebra}
    $QH^{*}(G/B)$    is   a $\mathbb{Z}^{2}$-filtered algebra with respect to    $\mathcal{F}$. That is,  we have   $$F_{ a } \star F_{ b } \subset F_{ a+b },$$ for any    $ a , b \in \mathbb{Z}^{2}$.     \end{proposition}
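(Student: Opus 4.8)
The plan is to show that the quantum product respects the $\mathbb{Z}^2$-filtration $\mathcal{F}$ by reducing everything to a statement about the degrees appearing in the structure constants $N_{u,v}^{w,\lambda_B}$. Concretely, it suffices to check that whenever $N_{u,v}^{w,\lambda}\neq 0$ for some $\lambda\in Q^\vee$ (equivalently, the term $q_\lambda\sigma^w$ appears in $\sigma^u\star\sigma^v$), we have $gr_\alpha(q_\lambda\sigma^w)\leq gr_\alpha(\sigma^u)+gr_\alpha(\sigma^v)$ in the lexicographic order on $\mathbb{Z}^2$. Indeed, once this is established for basis elements $\sigma^u$, $\sigma^v$ lying in $F_{\mathbf a}$ and $F_{\mathbf b}$, bilinearity of $\star$ and the definition of $F_{\mathbf a}$ as a span of basis monomials give $F_{\mathbf a}\star F_{\mathbf b}\subset F_{\mathbf{a}+\mathbf b}$ directly; and the general case of arbitrary $q_\mu\sigma^u\in F_{\mathbf a}$, $q_\nu\sigma^v\in F_{\mathbf b}$ follows because multiplication by $q_i$ shifts $gr_\alpha$ by $(\langle\alpha,\alpha_i^\vee\rangle,\,2-\langle\alpha,\alpha_i^\vee\rangle)$, a fixed vector summing to $2=\deg q_i$, and this is compatible with the filtration shift. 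So the whole proposition collapses to one inequality about a single nonzero Gromov–Witten invariant of $G/B$.

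To prove that inequality I would split $gr_\alpha(q_\lambda\sigma^w)=(a_1,a_2)$ according to its two coordinates. Write $gr_\alpha(\sigma^u)=(\mathrm{sgn}_\alpha(u),\ell(u)-\mathrm{sgn}_\alpha(u))$ and similarly for $v$, and $gr_\alpha(q_\lambda\sigma^w)=(\mathrm{sgn}_\alpha(w)+\langle\alpha,\lambda\rangle,\ \ell(w)+\langle 2\rho,\lambda\rangle-\mathrm{sgn}_\alpha(w)-\langle\alpha,\lambda\rangle)$. Since $\deg$ is additive under $\star$ (the right-hand side of \eqref{degreeaxiom} forces $\ell(u)+\ell(v)=\ell(w)+\langle 2\rho,\lambda\rangle$ whenever $N_{u,v}^{w,\lambda}\neq 0$), the two coordinates always sum to the same value on both sides; hence the lexicographic inequality $(a_1,a_2)\leq \mathrm{sum}-(a_1,a_2)$ is \emph{equivalent} to the single inequality on first coordinates
\begin{equation*}
\mathrm{sgn}_\alpha(w)+\langle\alpha,\lambda\rangle\ \leq\ \mathrm{sgn}_\alpha(u)+\mathrm{sgn}_\alpha(v).
\end{equation*}
So the heart of the matter is this combinatorial-geometric bound. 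The key tool is the Peterson–Woodward reduction applied to the maximal parabolic $P=P_\alpha$: a nonzero $N_{u,v}^{w,\lambda_B}$ with $\langle\alpha,\lambda_B\rangle$ large should, via the ``quantum $\to$ classical'' principle, be forced to come from curves whose projection to $G/P_\alpha\cong F\ell_{\ldots;n}$ has controlled degree, and $\mathrm{sgn}_\alpha$ measures exactly whether the relevant Schubert class survives or dies under $\pi^*$. I would first handle the classical case $\lambda=0$, where the bound reads $\mathrm{sgn}_\alpha(w)\leq\mathrm{sgn}_\alpha(u)+\mathrm{sgn}_\alpha(v)$ and follows from the classical Chevalley/Monk behaviour of cup product with respect to the fibration $P_\alpha/B\cong\mathbb{P}^1$ (equivalently, from functoriality $\pi^*$ of ordinary cohomology together with the Leray–Hirsch splitting $H^*(G/B)\cong H^*(G/P_\alpha)\otimes H^*(\mathbb{P}^1)$). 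Then I would bootstrap to $\lambda\neq 0$ by pushing the Gromov–Witten invariant forward along $\pi$ and using that the fibre $\mathbb{P}^1$ can absorb at most one unit of the $\langle\alpha,\lambda\rangle$-degree into an increase of the $\mathrm{sgn}_\alpha$-statistic.

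The main obstacle will be making the $\lambda\neq 0$ step rigorous: one needs a precise comparison between $N_{u,v}^{w,\lambda_B}$ on $G/B$ and Gromov–Witten invariants on $G/P_\alpha$, controlling how the $\mathbb{P}^1$-fibre direction contributes. The cleanest route is to invoke the Peterson–Woodward comparison formula (Proposition, part (2)) in the reverse direction: every nonzero $N_{u,v}^{w,\lambda_B}$ either already has $\langle\alpha,\lambda_B\rangle\leq 1$ (and then a short case analysis on $\mathrm{sgn}_\alpha$ suffices), or it equals a Gromov–Witten invariant of $G/P_\alpha$ with strictly smaller first-coordinate degree, to which one applies an inductive hypothesis on $\langle\alpha,\lambda_B\rangle$. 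Assembling this induction carefully — keeping track of the $w_Pw_{P'}$ twist and verifying it never raises $\mathrm{sgn}_\alpha(w)$ by more than the drop in $\langle\alpha,\lambda_B\rangle$ allows — is where the real work lies; the rest is bookkeeping with the lexicographic order and the additivity of $\deg$ noted above. An alternative, if one prefers a more self-contained argument, is to derive the first-coordinate inequality from the explicit ($q_i$-linear) structure of the quantum Chevalley formula of Fulton–Woodward \cite{FW2004}, but I expect the Peterson–Woodward route to be shorter.
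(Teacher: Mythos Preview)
Your reduction is sound: the proposition is indeed equivalent to the single first-coordinate inequality
\[
\mathrm{sgn}_\alpha(w)+\langle\alpha,\lambda\rangle\ \leq\ \mathrm{sgn}_\alpha(u)+\mathrm{sgn}_\alpha(v)
\qquad\text{whenever }N_{u,v}^{w,\lambda}\neq 0,
\]
since the second coordinate is forced by the dimension axiom \eqref{degreeaxiom}. This is exactly the content (and in fact this inequality is restated later as Proposition~\ref{quantum-classical}~a)).

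Where your proposal goes wrong is the mechanism you choose to prove this inequality. The Peterson--Woodward comparison formula does not run ``in the reverse direction'' the way you need. It takes a class $\lambda_P\in Q^\vee/Q^\vee_{P_\alpha}$ and produces \emph{one specific} lift $\lambda_B$ with $\langle\alpha,\lambda_B\rangle\in\{0,-1\}$, identifying $N_{u,v}^{w,\lambda_P}$ on $G/P_\alpha$ with a particular invariant on $G/B$. It gives no statement about an arbitrary $N_{u,v}^{w,\lambda}$ on $G/B$ with $\langle\alpha,\lambda\rangle\geq 2$; there is no reduction of such an invariant to one on $G/P_\alpha$, and hence no inductive step in $\langle\alpha,\lambda\rangle$ of the kind you sketch. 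Your base case $\langle\alpha,\lambda\rangle\leq 1$ is also not a ``short case analysis'': already for $\langle\alpha,\lambda\rangle=1$ and $\mathrm{sgn}_\alpha(u)=\mathrm{sgn}_\alpha(v)=0$ one must show the invariant vanishes, which is essentially the full statement in a special case.

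The paper does not reprove the proposition here; it cites \cite{LL2010} and, in the Remark following Corollary~\ref{propfiberisomor}, records the actual ingredients of the proof: the \emph{non-negativity} of $N_{u,v}^{w,\lambda}$, the Peterson--Woodward formula, the quantum Chevalley formula (Proposition~\ref{quanchevalley}), and \emph{induction on $\ell(u)$}. That is precisely the route you demote to an ``alternative''. The scheme is: verify the first-coordinate inequality directly for $\sigma^{s_i}\star\sigma^v$ from the explicit Chevalley formula; then for general $u$, choose $s_i$ with $\ell(s_iu)=\ell(u)-1$, use that $\sigma^u$ occurs with coefficient~$1$ in $\sigma^{s_i}\star\sigma^{s_iu}$, and apply associativity together with non-negativity (so that no cancellation can rescue a term of too-high grading) to propagate the bound. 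Peterson--Woodward enters only to relate gradings to $G/P_\alpha$, not to reduce the degree $\lambda$. I would reorganize your argument around this Chevalley/positivity induction rather than the PW ``reverse'' step.
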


Denote   $Gr_{\rm ver}^{\mathcal{F}}(QH^*(G/B)):=\bigoplus\limits_{i\in \mathbb{Z}}Gr_{(i, 0)}^{\mathcal{F}}$    and
   $Gr_{\rm hor}^{\mathcal{F}}(QH^*(G/B)):=\bigoplus\limits_{j\in \mathbb{Z}}Gr_{(0, j)}^{\mathcal{F}}$. By replacing    $\mathcal{F}$    with    $\mathcal{F}'$,   we define this  similarly   for    $QH^*(G/B)[q_{\alpha^\vee}^{-1}]$. Furthermore, we consider the standard isomorphism    $QH^*(\mathbb{P}^1)\cong {\mathbb{C}[x, t]\over (x^{2}-t) }$ , where    $t$    labels the quantum variable of    $QH^*(\mathbb{P}^1)$.  As a core consequence of Proposition    \ref{propfilteralgebra} , we have

   \begin{corollary}[Theorem 1.4 of \cite{LL2010} ]   \label{propfiberisomor}    The following maps  {    \upshape       $\Psi_{\rm ver}^\alpha$      }  and  {    \upshape       $\Psi_{\rm hor}^\alpha$     }  are well-defined, and they are algebra isomorphisms.    \footnote{ In terms of  notations of    \cite{LL2010},    $\Psi_{\rm ver}^\alpha=\Psi_1$    and    $\Psi_{\rm hor}^\alpha=\Psi_2$.  }
  {    \upshape       $$\begin{array}{rrc}
       \Psi_{\rm ver}^\alpha:&  QH^*(\mathbb{P}^1) \longrightarrow  Gr_{\rm ver}^{\mathcal{F}}(QH^*(G/B)); &
              \quad x\mapsto            \overline{s_\alpha},\,\,  t\mapsto \overline{q_{\alpha^\vee}}\,\,\,. \\
         \Psi_{\rm hor}^\alpha:&  QH^*(G/P_\alpha) \longrightarrow  Gr_{\rm hor}^{\mathcal{F}}(QH^*(G/B)); &
                  \,\,     q_{\lambda_{P_\alpha}}\sigma^w\mapsto           \overline{\psi_{\alpha}(q_{\lambda_{P_\alpha}}\sigma^w)}\,\,\,.
\end{array}$$   Here   $\overline{s_\alpha} \in Gr^\mathcal{F}_{(1, 0)}\subset Gr_{\scriptsize\mbox{vert}}^\mathcal{F}(QH^*(G/B))$    dentoes  the graded component of    $\sigma^{s_\alpha}+\cup_{\boldsymbol{b}<(1, 0)}F_{\boldsymbol{b}}$.
 After a  natural extension , we have the following    $\mathbb{Z}^2$-graded algebra isomorphism:
   $$\Psi_{\rm ver}^\alpha\otimes \Psi_{\rm hor}^\alpha:  QH^*(\mathbb{P}^1)[t^{-1}]\otimes QH^*(G/P_\alpha)\overset{\cong}{\longrightarrow} Gr^{\mathcal{F}'}(QH^*(G/B)[q_{\alpha^\vee}^{-1}]). $$      }   \end{corollary}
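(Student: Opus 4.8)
The plan is to deduce everything from Proposition~\ref{propfilteralgebra}. Since $QH^*(G/B)$ is $\mathbb{Z}^2$-filtered with respect to $\mathcal{F}$ (and the localization with respect to $\mathcal{F}'$), the associated graded $Gr^{\mathcal{F}}(QH^*(G/B))$ is a $\mathbb{Z}^2$-graded $\mathbb{C}$-algebra, and $Gr_{\rm ver}^{\mathcal{F}}=\bigoplus_i Gr^{\mathcal{F}}_{(i,0)}$, $Gr_{\rm hor}^{\mathcal{F}}=\bigoplus_j Gr^{\mathcal{F}}_{(0,j)}$ are $\mathbb{Z}$-graded subalgebras, $\mathbb{Z}\times\{0\}$ and $\{0\}\times\mathbb{Z}$ being submonoids of $\mathbb{Z}^2$. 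The first step is to read off the monomial bases of these two slabs from the formula for $gr_\alpha$. Vanishing of the second coordinate of $gr_\alpha(q_\lambda\sigma^w)$ reads $\ell(w)-\mathrm{sgn}_\alpha(w)+\langle 2\rho-\alpha,\lambda\rangle=0$; since $\ell(w)\ge \mathrm{sgn}_\alpha(w)$ and $\langle 2\rho-\alpha,\beta^\vee\rangle>0$ for every simple $\beta\neq\alpha$ while $\langle 2\rho-\alpha,\alpha^\vee\rangle=0$, this forces $\lambda\in\mathbb{Z}_{\ge 0}\alpha^\vee$ (resp.\ $\mathbb{Z}\alpha^\vee$ after inverting $q_{\alpha^\vee}$) and $w\in\{e,s_\alpha\}$, so $Gr^{\mathcal{F}}_{(i,0)}$ is one-dimensional, spanned by $\overline{q_{m\alpha^\vee}}$ if $i=2m$ and by $\overline{q_{m\alpha^\vee}\sigma^{s_\alpha}}$ if $i=2m+1$; this is exactly the monomial basis of $QH^*(\mathbb{P}^1)\cong\mathbb{C}[x,t]/(x^2-t)$ (and of its localization). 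Dually, vanishing of the first coordinate reads $\langle\alpha,\lambda\rangle=-\mathrm{sgn}_\alpha(w)\in\{0,-1\}$, which, as $R^+_{P_\alpha}=\{\alpha\}$, says exactly that $\lambda$ is the Peterson-Woodward lift $\lambda_B$ of some effective $\lambda_{P_\alpha}\in H_2(G/P_\alpha)$ and $w=w'w_{P_\alpha}w_{P'}$ for a unique $w'\in W^{P_\alpha}$; thus $\psi_\alpha$ carries the Schubert basis of $QH^*(G/P_\alpha)$ bijectively onto the monomial basis of $Gr_{\rm hor}^{\mathcal{F}}$, so $\Psi_{\rm hor}^\alpha$ is a graded linear isomorphism, and $Gr^{\mathcal{F}'}_{\rm hor}=Gr^{\mathcal{F}}_{\rm hor}$ because the bidegree-$(0,j)$ condition already forces $\lambda\ge 0$.

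Next I would check that the two maps are ring homomorphisms. For $\Psi_{\rm ver}^\alpha$: since $QH^*(\mathbb{P}^1)$ is presented by the single relation $x^2=t$, it suffices to verify $\overline{\sigma^{s_\alpha}}\star\overline{\sigma^{s_\alpha}}=\overline{q_{\alpha^\vee}}$ in $Gr^{\mathcal{F}}_{(2,0)}$. As $gr_\alpha(\sigma^{s_\alpha})=(1,0)$, Proposition~\ref{propfilteralgebra} puts $\sigma^{s_\alpha}\star\sigma^{s_\alpha}$ in $F_{(2,0)}$; its classical part $\sigma^{s_\alpha}\cup\sigma^{s_\alpha}$ is a sum of length-$2$ Schubert classes, all of $gr_\alpha$-bidegree $<(2,0)$ and hence zero in $Gr^{\mathcal{F}}_{(2,0)}$, while its only quantum term is $q_{\alpha^\vee}$, with coefficient $1$ by the quantum Chevalley formula \cite{FW2004}; so its class in the one-dimensional $Gr^{\mathcal{F}}_{(2,0)}=\mathbb{C}\,\overline{q_{\alpha^\vee}}$ is $\overline{q_{\alpha^\vee}}$. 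Then $\Psi_{\rm ver}^\alpha$ is a well-defined ring map carrying $x^k$ to the $k$-th basis vector of $Gr_{\rm ver}^{\mathcal{F}}$, hence an isomorphism. For $\Psi_{\rm hor}^\alpha$: for $u,v\in W^{P_\alpha}$ one has $\psi_\alpha(\sigma^u)=\sigma^u$, so $\Psi_{\rm hor}^\alpha(\sigma^u)\star\Psi_{\rm hor}^\alpha(\sigma^v)=\overline{\sigma^u}\star\overline{\sigma^v}=\overline{\sigma^u\star\sigma^v}$, the product being in $QH^*(G/B)$; and since $\sigma^u\in F_{(0,\ell(u))}$, $\sigma^v\in F_{(0,\ell(v))}$, Proposition~\ref{propfilteralgebra} forces $\sigma^u\star\sigma^v\in F_{(0,d)}$ with $d=\ell(u)+\ell(v)$. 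This is the decisive point: by $\mathbb{Z}$-homogeneity every monomial $q_\lambda\sigma^x$ of $\sigma^u\star\sigma^v$ has $gr_\alpha(q_\lambda\sigma^x)=(a,d-a)$, membership in $F_{(0,d)}$ forces $a\le 0$, and only the terms with $a=0$---equivalently, by the first paragraph, those with $\lambda=\lambda_B$ Peterson-Woodward reduced and $x=w\,w_{P_\alpha}w_{P'}$---survive in $Gr^{\mathcal{F}}_{(0,d)}$. For these the Peterson-Woodward comparison formula gives $N^{x,\lambda}_{u,v}=N^{w,\lambda_{P_\alpha}}_{u,v}$, whence $\overline{\sigma^u\star\sigma^v}=\overline{\psi_\alpha(\sigma^u\star\sigma^v)}=\Psi_{\rm hor}^\alpha(\sigma^u\star\sigma^v)$ with the last product in $QH^*(G/P_\alpha)$; the general case follows by centrality of the $q$'s, and since $\Psi_{\rm hor}^\alpha$ is already bijective, it is an algebra isomorphism.

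Finally, for the localized statement: $Gr^{\mathcal{F}'}(QH^*(G/B)[q_{\alpha^\vee}^{-1}])$ is a commutative ring containing the (commuting) subalgebras $Gr_{\rm ver}^{\mathcal{F}'}$ and $Gr_{\rm hor}^{\mathcal{F}'}$, so the two algebra isomorphisms above induce a unique algebra homomorphism $\Psi_{\rm ver}^\alpha\otimes\Psi_{\rm hor}^\alpha$ out of the tensor product, which is $\mathbb{Z}^2$-graded by construction. To see it is bijective I would argue bidegree by bidegree: since $(QH^*(\mathbb{P}^1)[t^{-1}])_i$ is one-dimensional for every $i$, the source in bidegree $(i,j)$ has dimension $\dim(QH^*(G/P_\alpha))_j$, and from the monomial description of $Gr^{\mathcal{F}'}$ in the first paragraph---splitting $\lambda\in Q^\vee$ into its $\alpha^\vee$-component and its Peterson-Woodward reduced part and using $W=W^{P_\alpha}\cdot W_{P_\alpha}$---one checks that $\dim Gr^{\mathcal{F}'}_{(i,j)}$ is the same number, independently of $i$; an application of the quantum Chevalley formula to the leading term of $\sigma^{s_\alpha}\star\sigma^{w'w_{P_\alpha}w_{P'}}$ then shows that the images of the tensor-product basis vectors are linearly independent in each bidegree, hence form a basis. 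Therefore $\Psi_{\rm ver}^\alpha\otimes\Psi_{\rm hor}^\alpha$ is a $\mathbb{Z}^2$-graded algebra isomorphism. I expect the main obstacle to be the ring-homomorphism verification for $\Psi_{\rm hor}^\alpha$---isolating which terms of $\sigma^u\star\sigma^v$ survive modulo lower filtration and matching their structure constants to Gromov-Witten invariants of $G/P_\alpha$ through Peterson-Woodward---together with the quantum Chevalley bookkeeping for the leading term in the localized surjectivity step; Proposition~\ref{propfilteralgebra} is precisely what makes the extraneous terms disappear, so much of the genuine difficulty is already packaged there.
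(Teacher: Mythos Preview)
The paper does not give a self-contained proof of this corollary; it is quoted as Theorem~1.4 of \cite{LL2010}, and the only indication of an argument is the subsequent remark listing the key inputs (non-negativity of $N_{u,v}^{w,\lambda}$, the Peterson--Woodward comparison formula, the quantum Chevalley formula, and induction on $\ell(u)$). Your proposal takes Proposition~\ref{propfilteralgebra} as a black box and deduces the corollary from precisely these same ingredients, so in spirit it matches what the paper sketches.

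Your outline is essentially correct. The identification of the monomial bases of $Gr_{\rm ver}^{\mathcal{F}}$ and $Gr_{\rm hor}^{\mathcal{F}}$ is right; the computation $\overline{\sigma^{s_\alpha}}^{\,2}=\overline{q_{\alpha^\vee}}$ via quantum Chevalley is exactly what is needed for $\Psi_{\rm ver}^\alpha$; and the use of Peterson--Woodward to match the surviving terms of $\sigma^u\star_B\sigma^v$ in $Gr^{\mathcal{F}}_{(0,d)}$ with the structure constants of $QH^*(G/P_\alpha)$ is the heart of the horizontal statement and is correctly identified. One small streamlining for your final paragraph: rather than invoking quantum Chevalley again to analyze leading terms of $\sigma^{s_\alpha}\star\sigma^{w'w_{P_\alpha}w_{P'}}$, note that $q_{\alpha^\vee}\cdot q_{\alpha^\vee}^{-1}=1$ passes to a nontrivial relation in the associated graded (the bidegrees add correctly), so $\overline{q_{\alpha^\vee}}$ is a unit in $Gr^{\mathcal{F}'}$; since $\overline{\sigma^{s_\alpha}}^{\,2}=\overline{q_{\alpha^\vee}}$, multiplication by $\overline{\sigma^{s_\alpha}}$ is an isomorphism $Gr^{\mathcal{F}'}_{(i,j)}\to Gr^{\mathcal{F}'}_{(i+1,j)}$, which immediately gives both the dimension constancy in $i$ and surjectivity of $\Psi_{\rm ver}^\alpha\otimes\Psi_{\rm hor}^\alpha$ from the case $i=0$ already handled by $\Psi_{\rm hor}^\alpha$.
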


   \begin{remark} The proofs of Proposition    \ref{propfilteralgebra}    and Corollary    \ref{propfiberisomor}    are mainly based on the non-negativity of the structure constant    $N_{u, v}^{w, \lambda_P}$, the Peterson-Woodward comparison formula and the following quantum Chevalley formula (for general    $G/P$, see    \cite{FW2004}), and  the induction on  $\ell(u)$.

    \begin{proposition}   \label{quanchevalley} Let   $u\in  W$ and  $1\leq i\leq n-1$.  In  $QH^*(G/B)$,     we have
          $$ \sigma^u\star\sigma^{s_i} =\sum \langle \chi_i, \gamma^\vee\rangle \sigma^{us_\gamma}+\sum  \langle \chi_i, \gamma^\vee\rangle q_{\gamma^\vee}\sigma^{us_\gamma},$$   the first  sum over positive roots          $\gamma\in R^+$          that satisfy          $\ell(us_\gamma)=\ell(u)+1$, and the second  sum over positive roots          $\gamma\in R^+$          that satisfy          $\ell(us_\gamma)=\ell(u)+1-\langle 2\rho, \gamma^\vee\rangle$.  \end{proposition}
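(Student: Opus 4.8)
The plan is to treat the classical and the quantum part of $\sigma^u\star\sigma^{s_i}$ separately. Specializing $\boldsymbol{q}=\boldsymbol{0}$ (item 2) of Section 2.2) identifies the $q$-free part with the classical cup product $\sigma^u\cup\sigma^{s_i}$, which is precisely the classical Chevalley--Monk formula $\sigma^u\cup\sigma^{s_i}=\sum_{\gamma}\langle\chi_i,\gamma^\vee\rangle\sigma^{us_\gamma}$ (over $\gamma\in R^+$ with $\ell(us_\gamma)=\ell(u)+1$); this is classical and I would simply quote it from \cite{BGG1973}. So the whole content lies in the quantum corrections, that is, in the invariants $N_{u,s_i}^{w,\lambda}$ for $\lambda\neq\boldsymbol{0}$.

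First I would record the numerical constraints: by the degree axiom \eqref{degreeaxiom}, a nonzero $N_{u,s_i}^{w,\lambda}$ forces $\lambda\geq\boldsymbol{0}$ and $\ell(w)=\ell(u)+1-\langle 2\rho,\lambda\rangle$, so that only finitely many $\lambda$ (those with $\langle 2\rho,\lambda\rangle\leq\ell(u)+1$) can contribute. Since the Schubert divisor $\sigma^{s_i}\in H^2(G/B)$ is dual to $\chi_i$ under the pairing $H^2(G/B)\times H_2(G/B)\to\mathbb{Z}$ with $H_2(G/B)=Q^\vee$, the divisor axiom of Gromov--Witten theory (applicable since $\lambda\neq\boldsymbol{0}$) gives
\[
N_{u,s_i}^{w,\lambda}=\langle\chi_i,\lambda\rangle\cdot\int_{\overline{\mathcal{M}_{0,2}}(G/B,\lambda)}ev_1^*(\sigma^u)\cup ev_2^*(\sigma^{w^\vee}).
\]
Thus the claim reduces to showing that the two-point invariant on the right equals $1$ exactly when $\lambda=\gamma^\vee$ for a single positive root $\gamma$ and $w=us_\gamma$ (the length relation then being forced by the degree axiom), and vanishes otherwise.

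This last point is a statement about rational curves of small degree on $G/B$, which I would establish geometrically. Since $G/B$ is convex \cite{FP1997}, $\overline{\mathcal{M}_{0,2}}(G/B,\lambda)$ is irreducible of the expected dimension with flat evaluation maps, so (as with the enumerativity \eqref{positivity}) the two-point invariant counts transversally (Kleiman), for generic $g_1,g_2\in G$, the maps $f\colon\mathbb{P}^1\to G/B$ of class $\lambda$ with $f(0)\in g_1X^u$ and $f(\infty)\in g_2X^{w^\vee}$, and under the dimension relation $\ell(u)-\ell(w)=\langle 2\rho,\lambda\rangle-1$ this count is finite. To see that it is nonzero only when $\lambda$ is a coroot, I would degenerate $g_1X^u$ and $g_2X^{w^\vee}$ to the $T$-invariant Schubert and opposite Schubert varieties and run the standard $T$-fixed-point/curve-neighbourhood analysis (as in \cite{FW2004}): the irreducible $T$-stable curves in $G/B$ are exactly the lines joining $vB$ to $vs_\gamma B$ ($v\in W$, $\gamma\in R^+$), each of class $\gamma^\vee\in Q^\vee$, so in the limit any contributing curve is supported on such lines, and the degree/dimension bookkeeping forces it to be a single such line, whence $\lambda=\gamma^\vee$, $w=us_\gamma$ and multiplicity one. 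A purely type-$A$ alternative would be to invoke the Fomin--Gelfand--Postnikov presentation of $QH^*(F\ell_n)$ together with quantum Schubert polynomials and check the formula combinatorially, but that only relocates the difficulty into establishing that presentation.

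I expect the genuine obstacle to be this last step: controlling the higher-degree and reducible (chains-of-lines) contributions to $\overline{\mathcal{M}_{0,2}}(G/B,\lambda)$ and pinning the multiplicity to $1$. The non-negativity of all the structure constants (visible in \eqref{positivity}) is a useful consistency check but does not by itself rule out spurious degrees, so a genuine dimension estimate on the boundary strata of the Kontsevich space --- or, equivalently, an Atiyah--Bott localization computation on $\overline{\mathcal{M}_{0,2}}(G/B,\lambda)$ combined with the classification of $T$-stable curves above --- seems unavoidable, and that is where the bulk of the work would go.
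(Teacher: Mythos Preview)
The paper does not give a proof of this proposition at all: it is stated inside a Remark as the \emph{quantum Chevalley formula}, with the parenthetical ``for general $G/P$, see \cite{FW2004}'', and is used as an input to the proofs of Proposition~\ref{propfilteralgebra} and Corollary~\ref{propfiberisomor}. So there is nothing in the paper to compare your argument against; the authors simply quote the result from Fulton--Woodward.

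That said, your sketch is the standard and essentially correct route to the formula, and is close in spirit to what Fulton--Woodward actually do: separate off the $q=0$ part as the classical Chevalley--Monk formula, apply the divisor axiom to reduce the quantum terms to two-point invariants, and then use the classification of $T$-stable curves together with a dimension/boundary analysis on $\overline{\mathcal{M}_{0,2}}(G/B,\lambda)$ to see that only $\lambda=\gamma^\vee$ with $w=us_\gamma$ survives, with multiplicity one. Your self-diagnosis is accurate: the only genuinely nontrivial step is ruling out contributions from effective $\lambda$ that are not single coroots (and from reducible chains of $T$-stable lines), and this is precisely where \cite{FW2004} does the work via curve neighbourhoods and the length bound $\ell(us_\gamma)\geq \ell(u)+1-\langle 2\rho,\gamma^\vee\rangle$. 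If you want a self-contained proof in the paper's context, citing \cite{FW2004} at that point (as the authors do) is the appropriate move.
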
     \end{remark}

   \subsection{``Quantum    $\to$    Classical" reduction}

Corollary    \ref{propfiberisomor}    can give  applications of Gromov-Witten invariants in the ``quantum    $\to$    classical" reduction. We illustrate the main idea with a simple example.

   \begin{example}   \label{examFl3}    Consider    $G=SL(3, \mathbb{C})$,   $\alpha=\alpha_1$.  That is,    $G/B=F\ell_3$,    $G/P_\alpha=Gr(1, 3)= \mathbb{P}^2$. Then we have
    \begin{align*} C t\otimes \sigma_P^{s_1s_2}= & \   (\Psi_{\rm ver}^\alpha\otimes \Psi_{\rm hor}^\alpha)^{-1}(C \overline{q_1\sigma^{s_1s_2}_B}) \\
    = & \  (\Psi_{\rm ver}^\alpha\otimes \Psi_{\rm hor}^\alpha)^{-1}(\overline{\sigma^{s_2s_1}_B}\star\overline{\sigma^{s_2s_1}_B}) \\
   = &  \    (\Psi_{\rm ver}^\alpha\otimes \Psi_{\rm hor}^\alpha)^{-1}(\overline{\sigma^{s_2s_1}_B})\star (\Psi_{\rm ver}^\alpha\otimes \Psi_{\rm hor}^\alpha)^{-1}(\overline{\sigma^{s_2s_1}_B}) \\
   = & \   (x\otimes \sigma^{s_2}_P)\star (x\otimes \sigma^{s_2}_P) \\
    = & \   (x\star_{\mathbb{P}^1}x)\otimes (\sigma^{s_2}_P\star_{P}  \sigma^{s_2}_P) \\
    = & \   t\otimes D \sigma_{P}^{s_1s_2}.
 \end{align*}    Therefore, we have    $C=D$. Note that    $C=N_{s_2s_1, s_2s_1}^{s_1s_2, \alpha_1^\vee}$    is the Gromov-Witten invariant of degree    $\alpha^\vee_1$    in    $QH^*(G/B)$. $D= N_{s_2, s_2}^{s_1s_2, 0}$ is an  intersection number in the classical cohomology $H^*(G/P_\alpha)$, which can also be regarded as a classical intersection number in $H^*(G/B)$. Therefore we obtain ``quantum to classical" reduction:
    $$N_{s_2s_1, s_2s_1}^{s_1s_2, \alpha_1^\vee}=N_{s_2, s_2}^{s_1s_2, 0}.$$     \end{example}

From the definition of the graded mapping    $gr_\alpha$, we immediately obtain:
    \begin{lemma}   \label{lemmagradcomp}    Let    $u, v, w\in W$    and    $\lambda \in Q^\vee$. Then we have
    $gr_\alpha(\sigma^u)+gr_\alpha(\sigma^v)=gr_\alpha(q_\lambda\sigma^{w})$    if and only if the following two conditions both hold true:
    $${\rm (1)}\quad \ell(w)+\langle 2\rho, \lambda\rangle=\ell(u)+\ell(v),\qquad {\rm  (2)} \quad {\rm sgn}_\alpha(w)+\langle \alpha, \lambda\rangle ={\rm sgn}_\alpha(u)+{\rm sgn}_\alpha(v).$$       \end{lemma}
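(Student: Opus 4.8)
The plan is to prove the equivalence by a direct comparison of coordinates in $\mathbb{Z}^2$, using nothing beyond the definition of $gr_\alpha$. First I would specialize the defining formula
$$gr_\alpha(q_\lambda\sigma^w)=\bigl({\rm sgn}_\alpha(w)+\langle\alpha,\lambda\rangle,\ \ell(w)+\langle 2\rho,\lambda\rangle-{\rm sgn}_\alpha(w)-\langle\alpha,\lambda\rangle\bigr)$$
to the case $\lambda=0$, which gives $gr_\alpha(\sigma^u)=({\rm sgn}_\alpha(u),\ \ell(u)-{\rm sgn}_\alpha(u))$ and likewise for $v$; adding these yields
$$gr_\alpha(\sigma^u)+gr_\alpha(\sigma^v)=\bigl({\rm sgn}_\alpha(u)+{\rm sgn}_\alpha(v),\ \ell(u)+\ell(v)-{\rm sgn}_\alpha(u)-{\rm sgn}_\alpha(v)\bigr).$$

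Next I would record the elementary fact (already observed in item b) of Section 2.3) that for every basis monomial the two coordinates of its $gr_\alpha$-value sum to its $\mathbb{Z}$-degree: the coordinate sum of $gr_\alpha(q_\lambda\sigma^w)$ equals $\ell(w)+\langle 2\rho,\lambda\rangle$, while the coordinate sum of $gr_\alpha(\sigma^u)+gr_\alpha(\sigma^v)$ equals $\ell(u)+\ell(v)$. Since two vectors in $\mathbb{Z}^2$ coincide exactly when they share both their first coordinate and their coordinate sum, the identity $gr_\alpha(\sigma^u)+gr_\alpha(\sigma^v)=gr_\alpha(q_\lambda\sigma^w)$ is equivalent to the conjunction of ${\rm sgn}_\alpha(u)+{\rm sgn}_\alpha(v)={\rm sgn}_\alpha(w)+\langle\alpha,\lambda\rangle$ and $\ell(u)+\ell(v)=\ell(w)+\langle 2\rho,\lambda\rangle$, which are precisely conditions (2) and (1). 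If one prefers not to invoke that remark, one can compare second coordinates directly: granting (2), substituting ${\rm sgn}_\alpha(u)+{\rm sgn}_\alpha(v)={\rm sgn}_\alpha(w)+\langle\alpha,\lambda\rangle$ into the second coordinate of the left-hand side makes the ${\rm sgn}_\alpha$- and $\langle\alpha,\lambda\rangle$-contributions cancel against those on the right, leaving exactly (1).

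I do not expect any genuine obstacle, since the author already flags the statement as immediate from the definition. The only step that is not pure substitution is the trivial observation that $(a_1,a_2)=(b_1,b_2)$ in $\mathbb{Z}^2$ is equivalent to $a_1=b_1$ together with $a_1+a_2=b_1+b_2$; here the latter is exactly the degree identity, which is why condition (1) is stated in the ``$\ell$'' form rather than in terms of second coordinates. Beyond that, the argument is a two-line computation.
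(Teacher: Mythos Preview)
Your proposal is correct and matches the paper's approach: the paper gives no explicit proof, stating only that the lemma follows ``immediately'' from the definition of $gr_\alpha$, and your direct coordinate comparison is precisely the unpacking of that remark.
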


    Therefore, based on the idea of Example    \ref{examFl3}    and the above lemma, using  Corollary    \ref{propfiberisomor},   we can obtain the following reduction.

   \begin{proposition}[Theorem 1.1 of \cite{LL2012}]   \label{quantum-classical}    For any    $u, v, w\in W$    and any    $\lambda\in Q^\vee$, we have
    \begin{enumerate}   \item    [a)]          $N_{u, v}^{w, \lambda}=0$           unless   {    \upshape             $ \mbox{sgn}_\alpha(w)+\langle \alpha, \lambda\rangle \leq \mbox{sgn}_\alpha(u)+\mbox{sgn}_\alpha(v)$            } for all    $\alpha\in \Delta $.
    \item    [b)] If  {    \upshape             $ \mbox{sgn}_\alpha(w)+\langle \alpha, \lambda\rangle =\mbox{sgn}_\alpha(u)+\mbox{sgn}_\alpha(v)=2$            }  holds for some          $\alpha\in \Delta$, then
  {    \upshape             $$N_{u, v}^{w, \lambda}=N_{us_\alpha, vs_\alpha}^{w, \lambda-\alpha^\vee}=
                 \begin{cases} N_{u, vs_\alpha}^{ws_\alpha, \lambda-\alpha^\vee}, &       \mbox{sgn}_\alpha(w)=0 , \\
                               \vspace{-0.3cm}   &  \\
                         N_{u, vs_\alpha}^{ws_\alpha, \lambda}, &     \mbox{sgn}_\alpha(w)=1 \,\,.  \end{cases}$$            }   \end{enumerate}     \end{proposition}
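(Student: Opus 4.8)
The plan is to deduce both statements directly from the $\mathbb{Z}^2$-filtered algebra structure of Proposition \ref{propfilteralgebra} and the fiber isomorphisms of Corollary \ref{propfiberisomor}, using Lemma \ref{lemmagradcomp} as the combinatorial bookkeeping device that translates vanishing/nonvanishing of Gromov-Witten invariants into statements about graded pieces. For part a), I would expand $\sigma^u\star\sigma^v=\sum_{w,\lambda}N_{u,v}^{w,\lambda}q_\lambda\sigma^w$ and observe that since $\sigma^u\in F_{gr_\alpha(\sigma^u)}$ and $\sigma^v\in F_{gr_\alpha(\sigma^v)}$, the filtered property gives $\sigma^u\star\sigma^v\in F_{gr_\alpha(\sigma^u)+gr_\alpha(\sigma^v)}$. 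Hence every basis element $q_\lambda\sigma^w$ appearing with nonzero coefficient must satisfy $gr_\alpha(q_\lambda\sigma^w)\le gr_\alpha(\sigma^u)+gr_\alpha(\sigma^v)$ in the lexicographic order. Combined with the fact (point b) after the definition of $gr_\alpha$) that the two coordinates of $gr_\alpha$ sum to the total degree, and the degree axiom \eqref{degreeaxiom} which forces $\ell(w)+\langle 2\rho,\lambda\rangle=\ell(u)+\ell(v)$ whenever $N_{u,v}^{w,\lambda}\ne 0$, the lexicographic inequality on the pair collapses to an inequality on the \emph{first} coordinate alone: $\mathrm{sgn}_\alpha(w)+\langle\alpha,\lambda\rangle\le \mathrm{sgn}_\alpha(u)+\mathrm{sgn}_\alpha(v)$. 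Since $\alpha\in\Delta$ is arbitrary, this is exactly a).

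For part b), the hypothesis $\mathrm{sgn}_\alpha(w)+\langle\alpha,\lambda\rangle=\mathrm{sgn}_\alpha(u)+\mathrm{sgn}_\alpha(v)=2$ says both $\sigma^u$ and $\sigma^v$ have first $gr_\alpha$-coordinate equal to $1$, i.e. $\mathrm{sgn}_\alpha(u)=\mathrm{sgn}_\alpha(v)=1$, and that equality holds in the first coordinate of the filtration bound. By Lemma \ref{lemmagradcomp} this equality together with the degree equality means $gr_\alpha(q_\lambda\sigma^w)=gr_\alpha(\sigma^u)+gr_\alpha(\sigma^v)$ exactly, so $N_{u,v}^{w,\lambda}$ is precisely the structure constant of the product in the associated graded algebra $Gr^{\mathcal F'}(QH^*(G/B)[q_{\alpha^\vee}^{-1}])$ of the \emph{leading} terms $\overline{\sigma^u}$ and $\overline{\sigma^v}$. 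Under the isomorphism $\Psi_{\rm ver}^\alpha\otimes\Psi_{\rm hor}^\alpha$, since $\mathrm{sgn}_\alpha(u)=1$ we have (via the Peterson-Woodward map $\psi_\alpha$ applied to $us_\alpha\in W^{P_\alpha}$) that $\overline{\sigma^u}$ corresponds to $x\otimes(\text{horizontal part})$, and likewise for $\overline{\sigma^v}$; multiplying in $QH^*(\mathbb{P}^1)[t^{-1}]\otimes QH^*(G/P_\alpha)$ uses $x\star_{\mathbb{P}^1}x=t$, which is the mechanism producing the shift $\lambda\mapsto\lambda-\alpha^\vee$ (the $t$-factor accounts for exactly one unit of $q_{\alpha^\vee}$). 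Reading this identity back in $QH^*(G/B)$ gives $N_{u,v}^{w,\lambda}=N_{us_\alpha,vs_\alpha}^{w,\lambda-\alpha^\vee}$. The final case split on $\mathrm{sgn}_\alpha(w)$ comes from applying the \emph{same} argument once more, now to the pair $(us_\alpha, vs_\alpha)$: here $\mathrm{sgn}_\alpha(us_\alpha)=0$, so only the factor coming from $v$ contributes an $x$, and whether the $q_{\alpha^\vee}$-shift is absorbed or not depends on whether $w$ already ``has'' the $x$-factor, i.e. on $\mathrm{sgn}_\alpha(w)$; a direct check of the two possibilities in the tensor product yields the stated cases.

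The routine parts are the two $gr_\alpha$-coordinate computations and verifying that the degree axiom forces the second-coordinate inequality to be automatic once the first holds; these I would carry out explicitly but briefly. The step I expect to be the main obstacle is making the passage ``structure constant in $QH^*(G/B)$ equals structure constant of leading terms in the associated graded'' fully rigorous: one must check that no \emph{lower}-filtration contributions to $\sigma^u\star\sigma^v$ can land in the same top graded piece (so that the coefficient of $q_\lambda\sigma^w$ genuinely equals the associated-graded coefficient), and one must correctly identify, under $\Psi_{\rm ver}^\alpha\otimes\Psi_{\rm hor}^\alpha$, which representative $q_{\lambda_B}\sigma^{ww_Pw_{P'}}$ the class $\psi_\alpha(q_{\lambda_{P_\alpha}}\sigma^w)$ actually is — since for $P=P_\alpha$ one has $w_P=s_\alpha$ and $P'$ is either $B$ or $P_\alpha$ depending on $\langle\alpha,\lambda_B\rangle$, and tracking this correctly through both iterations is where sign and degree errors would creep in. Once that identification is pinned down, the $x^2=t$ relation does all the remaining work and the case split falls out mechanically.
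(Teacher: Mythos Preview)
Your approach is correct and matches the paper's: the paper does not give a detailed proof of this proposition (it is cited from \cite{LL2012}), but the sentence immediately preceding the statement indicates exactly the route you take---namely, deducing it from the idea of Example \ref{examFl3} together with Lemma \ref{lemmagradcomp} and Corollary \ref{propfiberisomor}. One small simplification for part a): the lexicographic inequality $gr_\alpha(q_\lambda\sigma^w)\le gr_\alpha(\sigma^u)+gr_\alpha(\sigma^v)$ already forces the first-coordinate inequality directly, so the appeal to the degree axiom there is unnecessary; and your worry about ``lower-filtration contributions landing in the top graded piece'' is unfounded, since $\sigma^u,\sigma^v$ are single basis elements, so $\overline{\sigma^u}\cdot\overline{\sigma^v}$ is precisely the image of $\sigma^u\star\sigma^v$ in the associated graded.
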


   \begin{corollary}   \label{cored}
   If    $u, v, w\in W$    and    $\alpha\in \Delta $    satisfy    $\mbox{sgn}_\alpha(v)=\mbox{sgn}_\alpha(w)=1$    and    $\mbox{sgn}_\alpha(u)=0$, then    $$N_{u, v}^{w, 0}=N_{u, vs_\alpha}^{ws_\alpha, 0}.$$     \end{corollary}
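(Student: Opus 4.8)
The plan is to unwind the $\lambda=0$ Gromov--Witten invariants appearing in the statement into classical triple intersection numbers on $G/B$, and then to compare the two sides by pushing forward along the projection $\pi\colon G/B\to G/P_\alpha$. Because $\overline{\mathcal{M}_{0,3}}(G/B,0)=G/B$ with all three evaluation maps equal to the identity, and because $w^\vee=w_0w$ when $P=B$, we have
\[
N_{u,v}^{w,0}=\int_{G/B}\sigma^{u}\cup\sigma^{v}\cup\sigma^{w_0w},\qquad N_{u,vs_\alpha}^{ws_\alpha,0}=\int_{G/B}\sigma^{u}\cup\sigma^{vs_\alpha}\cup\sigma^{w_0ws_\alpha},
\]
so it is enough to show these two integers are equal. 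I would first record the elementary $\mathrm{sgn}_\alpha$--bookkeeping: one has $\mathrm{sgn}_\alpha(x)=0\iff x\in W^{P_\alpha}$; one has $\mathrm{sgn}_\alpha(x)=1$ exactly when $x=(xs_\alpha)s_\alpha$ with $xs_\alpha\in W^{P_\alpha}$ and $\ell(xs_\alpha)=\ell(x)-1$; and $\mathrm{sgn}_\alpha(w_0x)=1-\mathrm{sgn}_\alpha(x)$. The hypotheses therefore give $u\in W^{P_\alpha}$, $vs_\alpha\in W^{P_\alpha}$ (with $v=(vs_\alpha)s_\alpha$), $ws_\alpha\in W^{P_\alpha}$, and $w_0w\in W^{P_\alpha}$ (with $w_0ws_\alpha=(w_0w)s_\alpha$ and $\ell(w_0ws_\alpha)=\ell(w_0w)+1$).

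Next I would use two facts about the $\mathbb{P}^1$-bundle $\pi$. The first is already recorded in the text: $\sigma^{x}_{G/B}=\pi^{*}\sigma^{x}_{G/P_\alpha}$ for $x\in W^{P_\alpha}$; combined with the projection formula and $\dim_{\mathbb{C}}(P_\alpha/B)=1$ this gives $\pi_{*}(\pi^{*}a)=a\cup\pi_{*}(1)=0$. The second is the degree-one pushforward $\pi_{*}\bigl(\sigma^{xs_\alpha}_{G/B}\bigr)=\sigma^{x}_{G/P_\alpha}$ for $x\in W^{P_\alpha}$ with $\ell(xs_\alpha)=\ell(x)+1$, which holds because $\pi$ restricts to an isomorphism of the open opposite Bruhat cell of $X^{xs_\alpha}$ onto that of $X^{x}_{G/P_\alpha}$ (their dimensions match precisely under $\ell(xs_\alpha)=\ell(x)+1$). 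Now apply $\pi_{*}$ and the projection formula to the two integrals above. In the first, $\sigma^{u}$ and $\sigma^{w_0w}$ are pullbacks and $\sigma^{v}=\sigma^{(vs_\alpha)s_\alpha}$ pushes down to $\sigma^{vs_\alpha}_{G/P_\alpha}$, so
\[
N_{u,v}^{w,0}=\int_{G/P_\alpha}\sigma^{u}\cup\sigma^{vs_\alpha}\cup\sigma^{w_0w}.
\]
In the second, $\sigma^{u}$ and $\sigma^{vs_\alpha}$ are pullbacks and $\sigma^{w_0ws_\alpha}=\sigma^{(w_0w)s_\alpha}$ pushes down to $\sigma^{w_0w}_{G/P_\alpha}$, giving exactly the same right-hand side. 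Hence $N_{u,v}^{w,0}=N_{u,vs_\alpha}^{ws_\alpha,0}$.

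The only step here that is not a formal manipulation is the degree-one pushforward $\pi_{*}\sigma^{xs_\alpha}=\sigma^{x}$ (equivalently, $\pi^{*}\pi_{*}$ acting as the Demazure operator $\partial_\alpha$), so that is the point I would spell out most carefully; it is nevertheless a classical fact about Schubert varieties, independent of any quantum input. One could alternatively try to stay inside the associated graded of Corollary~\ref{propfiberisomor} applied to the classical part, identifying the coefficient of $\sigma^{w}$ in $\sigma^{u}\cup\sigma^{v}$ with that of $\sigma^{ws_\alpha}$ in $\sigma^{u}\cup\sigma^{vs_\alpha}$ via the ``vertical $\otimes$ horizontal'' factorization, but the pushforward argument above seems the most direct.
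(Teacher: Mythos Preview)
Your proof is correct, but it takes a different route from the paper's. The paper deduces the corollary in one line from Proposition~\ref{quantum-classical}~b): setting $\bar u=v$, $\bar v=us_\alpha$, $\bar w=ws_\alpha$ and $\lambda=\alpha^\vee$, one checks $\mathrm{sgn}_\alpha(\bar w)+\langle\alpha,\alpha^\vee\rangle=\mathrm{sgn}_\alpha(\bar u)+\mathrm{sgn}_\alpha(\bar v)=2$ with $\mathrm{sgn}_\alpha(\bar w)=0$, so the proposition gives $N_{\bar u s_\alpha,\bar v s_\alpha}^{\bar w,0}=N_{\bar u,\bar v s_\alpha}^{\bar w s_\alpha,0}$, which unwinds to the claim. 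Your argument instead stays entirely on the classical side: you interpret both invariants as triple intersection numbers on $G/B$ and push forward along the $\mathbb{P}^1$-bundle $\pi\colon G/B\to G/P_\alpha$, using $\pi^*\sigma^x=\sigma^x$ for $x\in W^{P_\alpha}$ and $\pi_*\sigma^{xs_\alpha}=\sigma^x$, to identify each with the same integral on $G/P_\alpha$. This is more elementary and self-contained---it avoids invoking the quantum filtration machinery of \cite{LL2012}---at the cost of spelling out the Schubert pushforward formula. The paper's version is shorter precisely because that machinery has already been packaged into Proposition~\ref{quantum-classical}; underneath, both arguments rest on the same $\mathbb{P}^1$-bundle geometry, and indeed your closing remark about the ``vertical~$\otimes$~horizontal'' factorization points exactly at the mechanism behind the paper's proposition.
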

   \begin{proof} Let    $\bar u=v, \bar v= us_\alpha, \bar w=ws_\alpha$ , From the proposition    $\mbox{sgn}_\alpha(\bar w)=0$    and    $\mbox{sgn}_\alpha(\bar w)+\langle \alpha, \alpha^\vee\rangle =\mbox{sgn}_\alpha(\bar u)+\mbox{sgn}_\alpha(\bar v)=2$ , Therefore from Proposition \ref{quantum-classical},
    $$N_{u, vs_\alpha}^{ws_\alpha, 0}=N^{ws_\alpha, 0}_{vs_\alpha, u}=N_{\bar u s_\alpha, \bar vs_\alpha}^{\bar w, \alpha^\vee-\alpha^\vee}=N_{\bar u, \bar vs_\alpha}^{\bar w s_\alpha, \alpha^\vee-\alpha^\vee}=N_{v, u}^{w, 0}=N_{u, v}^{w, 0}. $$     \end{proof}
   \begin{example} Consider    $G/B=F\ell_4$. Take    $u=s_3s_2s_1s_2$,    $v=s_2s_1s_2$,    $w=s_1s_2s_3$    and    $\lambda=\alpha_1^\vee+\alpha_2^\vee$.

    $$N_{u, v}^{w, \lambda}=N_{u, vs_3}^{ws_3, \lambda+\alpha_3^\vee} =N_{s_3s_2s_1s_2, s_2s_1s_2s_3}^{s_1s_2, \alpha_1^\vee+\alpha_2^\vee+\alpha_3^\vee},$$
    $$N_{u, v}^{w, \lambda}=N_{s_3s_2s_1 , s_2s_1s_2s_3}^{s_1, \alpha_1^\vee+\alpha_2^\vee+\alpha_3^\vee}
         =N_{s_3s_2s_1, s_2s_1s_2}^{s_1s_3, \alpha_1^\vee+\alpha_2^\vee}
         =N_{s_3s_2s_1,  s_2s_1}^{s_1s_3s_2, \alpha_1^\vee}
         =N_{s_3s_2,  s_2}^{s_1s_3s_2, 0}=1.$$     \end{example}

We call    $u\in S_n$    a Grassmannian type permutation if there exists    $k$    such that    $u(1)<u(2)<\cdots <u(k)$    and    $u(k+1)<u(k+2)<\cdots < u(n)$ , As an application of the ``quantum    $\to$    classical" reduction, we have

   \begin{proposition}[Theorem 1.2 of \cite{LL2012}] Let   $u, v, w\in S_n$    and    $\lambda\in Q^\vee$, If    $u$    is Grassmannian type permutation, then there is    $v', w'\in S_n$    such that in    $QH^*(F\ell_n)$,
    $$ N_{u, v}^{w, \lambda}=N_{u, v'}^{w', 0}.$$     \end{proposition}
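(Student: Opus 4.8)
The plan is to prove the statement by iteratively applying the ``quantum $\to$ classical'' reduction of Proposition~\ref{quantum-classical}(b), with the Grassmannian hypothesis on $u$ guaranteeing that we can always find a simple root $\alpha$ to run the reduction against until no quantum parameters remain. First I would set up the right bookkeeping: given that $u$ is Grassmannian of type $k$, write $\lambda = \sum_i b_i\alpha_i^\vee \in Q^\vee$. The key observation to exploit is that for a Grassmannian permutation of type $k$, $\mathrm{sgn}_{\alpha_i}(u) = 0$ for every $i \neq k$ (since $u(i) < u(i+1)$ forces $\ell(us_{\alpha_i}) = \ell(u)+1$), and $\mathrm{sgn}_{\alpha_k}(u) = 1$ exactly when $u(k) > u(k+1)$. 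Combined with Proposition~\ref{quantum-classical}(a), which says $N_{u,v}^{w,\lambda} = 0$ unless $\mathrm{sgn}_{\alpha}(w) + \langle \alpha,\lambda\rangle \le \mathrm{sgn}_\alpha(u) + \mathrm{sgn}_\alpha(v)$ for all $\alpha$, this immediately controls the possible $b_i$: for $i \neq k$ we get $b_i \le \mathrm{sgn}_{\alpha_i}(v) - \mathrm{sgn}_{\alpha_i}(w) \le 1$, and in fact the condition $\mathrm{sgn}_{\alpha_i}(w) + b_i \le 0 + \mathrm{sgn}_{\alpha_i}(v)$ forces $b_i \in \{0,1\}$ with $b_i = 1$ only when $\mathrm{sgn}_{\alpha_i}(v) = 1$ and $\mathrm{sgn}_{\alpha_i}(w) = 0$.

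Next I would show that whenever $N_{u,v}^{w,\lambda} \neq 0$ with $\lambda \neq 0$, there is a simple root $\alpha_i$ with $b_i \ge 1$ against which we can reduce. If $b_i \ge 1$ for some $i \neq k$, then by the previous paragraph $\mathrm{sgn}_{\alpha_i}(v) = 1$, $\mathrm{sgn}_{\alpha_i}(w) = 0$, and $\mathrm{sgn}_{\alpha_i}(u) = 0$, which is precisely the hypothesis of Corollary~\ref{cored} (taking $\lambda$ in place of $0$ and noting $b_i \ge 1$ lets us peel off one copy of $\alpha_i^\vee$): more carefully, the case $\mathrm{sgn}_{\alpha_i}(w) = 0$ of Proposition~\ref{quantum-classical}(b) applies once we know $\mathrm{sgn}_{\alpha_i}(w) + \langle\alpha_i,\lambda\rangle = \mathrm{sgn}_{\alpha_i}(u) + \mathrm{sgn}_{\alpha_i}(v) = 2$, giving $N_{u,v}^{w,\lambda} = N_{u, vs_{\alpha_i}}^{ws_{\alpha_i}, \lambda - \alpha_i^\vee}$, and crucially $vs_{\alpha_i}, ws_{\alpha_i}$ are new permutations while $u$ is unchanged and still Grassmannian of the same type $k$. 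The subtle point is the case $b_i \ge 1$ only for $i = k$: here one must check that $\mathrm{sgn}_{\alpha_k}(u) = 1$ (equivalently $u(k) > u(k+1)$) forces the reduction to apply, again landing in Proposition~\ref{quantum-classical}(b); and one should verify that after reducing at $\alpha_k$ the permutation $us_{\alpha_k}$ need not stay Grassmannian, so the reduction at $\alpha_k$ must be handled with a little care — perhaps by reducing at all $i \neq k$ first, or by observing that after exhausting $b_k$ the remaining degree has all $b_i = 0$ and we are done. Note that $\langle\alpha_j, \alpha_i^\vee\rangle$ for the $A_{n-1}$ Cartan matrix is $2$ if $j=i$, $-1$ if $|i-j|=1$, $0$ otherwise, so subtracting $\alpha_i^\vee$ from $\lambda$ changes $b_i$ by $-2$ but can increase $b_{i\pm1}$ by $1$; one must therefore argue termination by a suitable monotone quantity, e.g. $\langle 2\rho, \lambda\rangle = \ell(u) + \ell(v) - \ell(w)$ which strictly decreases by $\langle 2\rho, \alpha_i^\vee\rangle = 2(n-1) > 0$ wait — more simply, $\deg(q_\lambda\sigma^w)$ is bounded and $\ell(w)$ strictly increases under the reduction, or one tracks that the total $\langle 2\rho,\lambda\rangle$ drops.

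The cleanest way to organize the induction is on $\langle 2\rho, \lambda\rangle = \sum_i b_i \langle 2\rho, \alpha_i^\vee\rangle$, which is a nonnegative integer by \eqref{degreeaxiom} and which strictly decreases each time we apply Proposition~\ref{quantum-classical}(b) since $\lambda \mapsto \lambda - \alpha_i^\vee$ and $\langle 2\rho, \alpha_i^\vee\rangle > 0$. At each stage, as long as $\lambda \neq 0$ we have $\langle\alpha_i,\lambda\rangle = 2b_i - b_{i-1} - b_{i+1}$ summing (with multiplicities) to something positive, but more to the point $\sum_i b_i > 0$ so some $b_i \ge 1$; the argument above then produces an admissible reduction at that $\alpha_i$ (after checking the $\mathrm{sgn}$ conditions using the Grassmannian property of $u$ and positivity/degree constraints from Proposition~\ref{quantum-classical}(a)), with $u$ remaining fixed and Grassmannian throughout (handling the $i = k$ reductions either last or by the observation that the relevant reduction instance of Proposition~\ref{quantum-classical}(b) still only modifies $v$ and $w$). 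When $\lambda$ finally reaches $0$ we set $v' , w'$ to be the accumulated permutations and obtain $N_{u,v}^{w,\lambda} = N_{u,v'}^{w',0}$.

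I expect the main obstacle to be verifying that the hypotheses of Proposition~\ref{quantum-classical}(b) are genuinely met at every step — specifically, establishing the equality $\mathrm{sgn}_{\alpha_i}(w) + \langle\alpha_i,\lambda\rangle = \mathrm{sgn}_{\alpha_i}(u) + \mathrm{sgn}_{\alpha_i}(v) = 2$ rather than merely the inequality from part~(a). This requires showing that the inequalities in Proposition~\ref{quantum-classical}(a) are saturated at the chosen root, which is not automatic; one likely needs a minimality or extremality argument (e.g. choose $i$ maximizing $b_i$, or use that the $\mathrm{sgn}_\alpha(u) = 0$ constraint for $i \neq k$ forces $\langle\alpha_i,\lambda\rangle + \mathrm{sgn}_{\alpha_i}(w) \le \mathrm{sgn}_{\alpha_i}(v) \le 1$, hence $b_i \le 1$, hence $b_i = 1$ saturates everything when $\mathrm{sgn}_{\alpha_i}(v)=1,\ \mathrm{sgn}_{\alpha_i}(w)=0$). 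Threading this saturation argument consistently through the $i = k$ case, where $\mathrm{sgn}_{\alpha_k}(u)$ can be $1$, is where the real work lies; the rest is bookkeeping.
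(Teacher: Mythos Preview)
The paper does not give its own proof of this proposition; it is quoted verbatim as Theorem~1.2 of \cite{LL2012}, so there is no in-paper argument to compare your proposal against. Judging the proposal on its own merits, there is a genuine gap.

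The central problem is that Proposition~\ref{quantum-classical}(b) requires $\mathrm{sgn}_\alpha(u)+\mathrm{sgn}_\alpha(v)=2$, i.e.\ \emph{both} summands equal $1$. For a Grassmannian permutation $u$ of type $k$ and any $i\neq k$ you correctly note $\mathrm{sgn}_{\alpha_i}(u)=0$; but then $\mathrm{sgn}_{\alpha_i}(u)+\mathrm{sgn}_{\alpha_i}(v)\le 1$, so the hypothesis of Proposition~\ref{quantum-classical}(b) can \emph{never} hold at $\alpha_i$ for $i\neq k$. Your assertion that ``the case $\mathrm{sgn}_{\alpha_i}(w)=0$ of Proposition~\ref{quantum-classical}(b) applies'' is therefore incorrect at every root except $\alpha_k$. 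The relabelling trick behind Corollary~\ref{cored} (putting $us_\alpha$ in the second slot so that both $\mathrm{sgn}$'s become $1$) does circumvent this obstruction when $\lambda=0$, but it does not straightforwardly generalise to peel an $\alpha_i^\vee$ off a nonzero $\lambda$ while keeping $u$ fixed: running the analogous relabelling for general $\lambda$ imposes side conditions on $\langle\alpha_i,\lambda\rangle$ and on $\mathrm{sgn}_{\alpha_i}(w)$ that you have not established. As written, your induction stalls at every simple root other than $\alpha_k$, and you flag the ``$=2$'' saturation only as a difficulty to be overcome rather than recognising that it is outright impossible for $i\neq k$.

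A secondary but pervasive issue: you repeatedly conflate the coefficient $b_i$ in $\lambda=\sum_j b_j\alpha_j^\vee$ with the pairing $\langle\alpha_i,\lambda\rangle=2b_i-b_{i-1}-b_{i+1}$. Proposition~\ref{quantum-classical}(a) constrains the latter, not the former, so the deduction ``$b_i\le \mathrm{sgn}_{\alpha_i}(v)-\mathrm{sgn}_{\alpha_i}(w)\le 1$'' is unjustified, and the later remark that ``subtracting $\alpha_i^\vee$ from $\lambda$ changes $b_i$ by $-2$'' is false (it changes $b_i$ by $-1$; it is $\langle\alpha_i,\lambda\rangle$ that drops by $2$). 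Your eventual choice of $\langle 2\rho,\lambda\rangle$ as the induction parameter is sound, but the combinatorial bookkeeping leading up to it needs to be redone with the pairing, not the coefficients.
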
    In the next section, we will discuss  the special Grassmannian type permutation   $u=s_1s_2\cdots s_{n-1}$   in detail.

   \section{From Seidel representation to quantum Pieri rule }

In this section, we discuss how to use Seidel representation in    $QH^*(F\ell_n)$    to obtain the quantum Pieri rule of the form    $\sigma^{s_is_{i+1}\cdots s_{n-1}}\star \sigma^u$, and propose expectations at  quantum    $K$-theory level.

   \subsection{Seidel operator and quantum Pieri rule  }    In    $S_n$,    $s_1s_2\cdots s_{n-1}=(1,2,\cdots, n)$    is an    $n$-cyle. So    $\langle s_1s_2\cdots s_{n-1}\rangle \cong \mathbb{Z}/n\mathbb{Z}$    is a cyclic group of order    $n$. Denote   $u_j^{(m)}=s_{m-j+1}\cdots s_{m-1}s_m$       ($0\leq j\leq m$ )   and    $u_0^{(m)}:=\mbox{id}$. For any    $u\in S_n$, we have the following properties (see Corollary 2.6 in    \cite{LL2010}   ):
   \begin{enumerate}   \item    [(1)] There exists a unique    $(j_{n-1}, \cdots, j_2, j_1)$    such that    $u=u_{j_{n-1}}^{(n-1)}\cdots u_{j_2}^{(2)}u_{j_1}^{(1)}$    is a  reduced expression of $u$.
    \item    [(2)]    $u(n)=n$    if and only if    $j_{n-1}=0$. We denote  $\lambda(u):=0\in Q^\vee$.
    \item    [(3)] Denote     $j_0:=0$. If    $u(n)\neq n$, then the set    $ \{ i\mid j_{i}>0, j_{i-1}=0, 1\leq i\leq n-1 \} $    is not empty,  and we  denote the maximum value of the set as $l$. The permutation  $u$ has
 the reduced expression of form $u=vs_{n-1}\cdots s_{l+1}s_l$    such that
    \begin{enumerate}      \item      $v= u_{j_{n-1}-1}^{(n-2)}\cdots u_{j_l-1}^{(l-1)}u_{j_{l-2}}^{(l-2)}\cdots u_{j_1}^{(1)}=us_{l}s_{l+1}\cdots s_{n-1}$    does not contain the term   $s_{n-1}$;   \item      $\ell(u)-\ell(us_{l}s_{l+1}\cdots s_{n-1})=n-l$;   \item      $\ell(u)-\ell(us_{l-1}s_{l}\cdots s_{n-1})\neq n-l+1$.  \end{enumerate}    Now we define
   \begin{align}\label{lambdau}
     \lambda(u):= \alpha_{l}^{\vee}+\alpha_{l+1}^{\vee}+\cdots + \alpha_{n-1}^{\vee},\mbox{ and } q_{\lambda(u)}:=q_{l}q_{l+1}\cdots q_{n-1}.
  \end{align}
   \end{enumerate}

   \begin{lemma}   \label{lemmaqterm}   Let    $1\leq m\leq n-1$,    $u \in S_{n}$,    $\lambda\in Q^\vee$. Then in    $QH^*(F\ell_n)$, we have
    $N_{s_{n-m}\cdots s_{n-1}, u}^{w, \lambda}\neq 0$    only if    $u(n)\neq n$    and   there exists  $1\leq k\leq n-1$     such that    $\lambda=\alpha_k^\vee+\cdots +\alpha_{n-1}^\vee$.  \end{lemma}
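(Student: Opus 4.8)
The plan is to analyze the Gromov-Witten invariant $N_{s_{n-m}\cdots s_{n-1}, u}^{w, \lambda}$ by applying the ``quantum $\to$ classical" reduction of Proposition \ref{quantum-classical} in the form of its Corollary-type iteration, and to track how the $\mbox{sgn}_\alpha$ statistics constrain the admissible degrees $\lambda$. The permutation $v_0 := s_{n-m}s_{n-m+1}\cdots s_{n-1}$ is of Grassmannian type (it is $u_m^{(n-1)}$ in the notation above), so only $\mbox{sgn}_{\alpha_{n-m}}(v_0) = \mbox{sgn}_{\alpha_{n-m+1}}(v_0) = \cdots = \mbox{sgn}_{\alpha_{n-1}}(v_0) = 1$ can be nonzero among the simple roots; in fact $\mbox{sgn}_{\alpha_i}(v_0) = 1$ exactly for $n-m \le i \le n-1$ and $0$ otherwise, since $v_0 s_{\alpha_i}$ has a shorter reduced word precisely when $\alpha_i$ appears ``at the right end." First I would compute these signs explicitly, and similarly record that $\mbox{sgn}_{\alpha_i}(u) \in \{0,1\}$ is arbitrary.

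Next, by part a) of Proposition \ref{quantum-classical}, a necessary condition for $N_{v_0, u}^{w, \lambda} \ne 0$ is that for every $\alpha_i \in \Delta$ we have $\mbox{sgn}_{\alpha_i}(w) + \langle \alpha_i, \lambda \rangle \le \mbox{sgn}_{\alpha_i}(v_0) + \mbox{sgn}_{\alpha_i}(u)$. Writing $\lambda = \sum_{i=1}^{n-1} b_i \alpha_i^\vee$ with $b_i \ge 0$ (by \eqref{degreeaxiom}), and using that $\langle \alpha_i, \alpha_j^\vee \rangle$ is the Cartan integer (so $\langle \alpha_i, \lambda \rangle = 2b_i - b_{i-1} - b_{i+1}$ with the convention $b_0 = b_n = 0$), the inequality becomes
\begin{equation*}
  2b_i - b_{i-1} - b_{i+1} \le \mbox{sgn}_{\alpha_i}(v_0) + \mbox{sgn}_{\alpha_i}(u) - \mbox{sgn}_{\alpha_i}(w) \le \mbox{sgn}_{\alpha_i}(v_0) + 1.
\end{equation*}
For $i < n-m$ the right-hand side is at most $1$, while for $n-m \le i \le n-1$ it is at most $2$. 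The point is that the sequence $(b_i)$ is nonnegative and these second-difference inequalities force it to be ``flat or decreasing" in a controlled way; I would argue, by examining where $b_i$ can first become positive and using the bound $2b_i - b_{i-1} - b_{i+1} \le 1$ on the initial segment, that $b_1 = b_2 = \cdots = b_{k-1} = 0$ for some $k$ and then $b_k = b_{k+1} = \cdots = b_{n-1}$, and moreover that this common value must be $1$ (a value $\ge 2$ would violate one of the degree/sign inequalities, since $c_1$-degree considerations together with $\ell(v_0) = m$ cap the total). This yields exactly $\lambda = \alpha_k^\vee + \cdots + \alpha_{n-1}^\vee$ for some $1 \le k \le n-1$.

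It remains to handle the claim $u(n) \ne n$. Here I would use the explicit description (2)--(3) preceding the lemma: $\lambda(u) = 0$ iff $u(n) = n$, and the quantum Chevalley formula (Proposition \ref{quanchevalley}) together with the positivity \eqref{positivity} shows that any nonzero quantum correction in a product $\sigma^{v_0}\star\sigma^u$ with $\lambda \ne 0$ must ``move" the value $n$; more concretely, since $v_0$ lies in the parabolic-type stratum associated to omitting $\alpha_{n-1}$ only on one side, a Gromov-Witten invariant with $\lambda$ having $b_{n-1} = 1 > 0$ and $u(n) = n$ is excluded by part a) applied at $\alpha = \alpha_{n-1}$ (there $\mbox{sgn}_{\alpha_{n-1}}(u) = 0$ because $u(n) = n$ forces $u(n-1) < u(n)$... wait, rather $u(n)=n$ means $n$ is not in the last two-cycle range, so $\ell(us_{n-1}) = \ell(u)+1$ and $\mbox{sgn}_{\alpha_{n-1}}(u) = 0$), so the inequality at $i = n-1$ reads $2 - b_{n-2} \le 1 + 0 - \mbox{sgn}_{\alpha_{n-1}}(w)$, i.e. $b_{n-2} \ge 1 + \mbox{sgn}_{\alpha_{n-1}}(w)$, which propagates back and eventually contradicts $b_1 \ge 0$ with the initial-segment bound. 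I expect the main obstacle to be making the inductive/propagation argument on the $b_i$ fully rigorous — in particular pinning down that the common positive value is exactly $1$ rather than larger, which I believe needs the combination of the sign inequalities at \emph{all} $i$ simultaneously rather than any single one, and possibly a separate appeal to the dimension constraint $\ell(w) + \langle 2\rho, \lambda\rangle = m + \ell(u)$ from \eqref{degreeaxiom} to bound $\sum b_i$ from above. The rest is bookkeeping with Cartan integers.
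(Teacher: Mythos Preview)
Your computation of $\mbox{sgn}_{\alpha_i}(v_0)$ is incorrect: the permutation $v_0 = s_{n-m}\cdots s_{n-1}$ in one-line notation is $\overline{1,\ldots,n{-}m{-}1,\,n{-}m{+}1,\ldots,n,\,n{-}m}$, which has a \emph{single} descent, at position $n-1$. Hence $\mbox{sgn}_{\alpha_i}(v_0)=1$ only for $i=n-1$ and is $0$ for all other $i$ (including $n-m\le i\le n-2$).

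More seriously, even with the corrected signs, the inequalities from Proposition~\ref{quantum-classical}\,a) together with the dimension constraint~\eqref{degreeaxiom} do \emph{not} force $\lambda$ into the required form, so your ``propagation'' on the $b_i$ cannot close. Take $n=5$, $m=4$, $u=w_0$, $w=\mathrm{id}$, and $\lambda=\alpha_1^\vee+2\alpha_2^\vee+2\alpha_3^\vee+2\alpha_4^\vee$. Then $\ell(w)+\langle 2\rho,\lambda\rangle=0+14=4+10=\ell(v_0)+\ell(u)$, and one checks directly that $\mbox{sgn}_{\alpha_i}(w)+\langle\alpha_i,\lambda\rangle\le \mbox{sgn}_{\alpha_i}(v_0)+\mbox{sgn}_{\alpha_i}(u)$ holds at every $i$; yet $\lambda$ is not of the form $\alpha_k^\vee+\cdots+\alpha_{n-1}^\vee$. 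Thus the ``main obstacle'' you flag is a genuine gap, not bookkeeping; the second-difference bounds you write down simply do not pin the sequence $(b_i)$ to a $\{0,1\}$-plateau.

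The paper's proof proceeds differently and uses two ingredients you omit. First, by positivity~\eqref{positivity} and the fact that $\sigma^{v_0}$ occurs in $(\sigma^{s_{n-1}})^{\star m}$, every $q_\lambda\sigma^w$ appearing in $\sigma^{v_0}\star\sigma^u$ already occurs in $(\sigma^{s_{n-1}})^{\star m}\star\sigma^u$; the quantum Chevalley formula (Proposition~\ref{quanchevalley}) then gives the a~priori constraints $a_{n-1}\ge 1$, $0\le a_j\le a_{n-1}$, and $a_j\ne 0\Rightarrow a_{j+1}\ne 0$ for $\lambda=\sum a_j\alpha_j^\vee$. Second, the case $a_{n-1}\ge 2$ is eliminated not by the vanishing criterion of Proposition~\ref{quantum-classical}\,a) alone but by iterating the \emph{reduction identities} of part~b) along the indices $j_{\min},j_{\min}+1,\ldots,n-2$ until one reaches a degree with $\langle\alpha_{n-1},\cdot\rangle\ge 3$, where a) finally forces vanishing. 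The same iterated reduction (down to $\lambda=\alpha_{n-1}^\vee$) is what handles the $u(n)=n$ case; your back-propagation inequality $b_{n-2}\ge 1+\mbox{sgn}_{\alpha_{n-1}}(w)$ gives no contradiction when $k<n-1$.
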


   \begin{theorem}   \label{Seidelelement}   Let   $u \in S_{n}$.  In    $QH^*(F\ell_{n})$, we have
    \begin{align*}
   \sigma^{s_1 s_2 \cdots s_{n-1}}\star \sigma^u &= \left \{
\begin{array}{rl}
 \sigma^{s_1 s_2 \cdots s_{n-1}u},& ~ \text{if } u(n)=n; \\
 q_{\lambda(u)}\sigma^{s_1 s_2 \cdots s_{n-1}u},&  ~   \mbox{if } u(n)\neq n.
\end{array} \right.
\end{align*}
\end{theorem}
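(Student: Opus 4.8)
The plan is to compute $\sigma^{s_1\cdots s_{n-1}}\star\sigma^u$ by induction on $\ell(u)$, using the quantum Chevalley formula (Proposition~\ref{quanchevalley}) as the engine and the ``quantum $\to$ classical'' reduction to control the quantum corrections. The base case is $u=\mathrm{id}$: here $\sigma^{s_1\cdots s_{n-1}}\star\sigma^{\mathrm{id}}=\sigma^{s_1\cdots s_{n-1}}$, and since $\mathrm{id}(n)=n$, $\lambda(\mathrm{id})=0$, this matches the claimed formula $\sigma^{s_1\cdots s_{n-1}\cdot\mathrm{id}}$. For the inductive step, write $u=u's_i$ with $\ell(u)=\ell(u')+1$, so $\sigma^u$ appears in the classical (and quantum) product $\sigma^{u'}\star\sigma^{s_i}$; then associativity gives
\begin{equation*}
\sigma^{s_1\cdots s_{n-1}}\star\sigma^{u'}\star\sigma^{s_i}=\left(\sigma^{s_1\cdots s_{n-1}}\star\sigma^{u'}\right)\star\sigma^{s_i},
\end{equation*}
and by the inductive hypothesis the right-hand side is a known multiple of $\sigma^{s_1\cdots s_{n-1}u'}$ quantum-multiplied by $\sigma^{s_i}$, which we again expand by Chevalley. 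The left-hand side we expand the other way: $\sigma^{u'}\star\sigma^{s_i}=\sum\langle\chi_i,\gamma^\vee\rangle\sigma^{u's_\gamma}+\sum\langle\chi_i,\gamma^\vee\rangle q_{\gamma^\vee}\sigma^{u's_\gamma}$, and we need to understand how $\sigma^{s_1\cdots s_{n-1}}$ multiplies each term.

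The key structural observation driving everything is that $s_1s_2\cdots s_{n-1}=(1,2,\ldots,n)$ is the $n$-cycle, so left multiplication by it is the Grassmannian-type permutation of Proposition~\ref{quantum-classical}'s setting; in particular Lemma~\ref{lemmaqterm} tells us that the only quantum parameters that can occur in $\sigma^{s_1\cdots s_{n-1}}\star\sigma^u$ are of the shape $q_k q_{k+1}\cdots q_{n-1}$, and only when $u(n)\neq n$. So the heart of the proof is a degree/grading count: using the $\mathbb{Z}$-grading $\deg(q_\lambda\sigma^w)=\ell(w)+\langle 2\rho,\lambda\rangle$ together with the $\mathrm{sgn}_\alpha$ refinements from Lemma~\ref{lemmagradcomp} and Proposition~\ref{quantum-classical}(a), I would show that for each $u$ there is exactly one pair $(w,\lambda)$ with $N_{s_1\cdots s_{n-1},u}^{w,\lambda}\neq 0$, that this coefficient equals $1$, that $\lambda=\lambda(u)$, and that $w=s_1\cdots s_{n-1}u$ (equivalently $w=u\uparrow 1$ in the notation of Section~3.1). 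The length bookkeeping: $\ell(s_1\cdots s_{n-1})=n-1$, so we need $\ell(w)+\langle 2\rho,\lambda\rangle=\ell(u)+n-1$; when $u(n)=n$ one checks $\ell(s_1\cdots s_{n-1}u)=\ell(u)+n-1$ and $\lambda=0$ works, while when $u(n)\neq n$ the reduced-word analysis in item (3) of Section~3.1 (the factorization $u=vs_{n-1}\cdots s_l$) gives $\ell(s_1\cdots s_{n-1}u)=\ell(u)+n-1-2(n-l)$ and $\langle 2\rho,\lambda(u)\rangle=2(n-l)$, so the degrees again balance with $\lambda=\lambda(u)$.

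To pin down that the coefficient is exactly $1$ with no other terms, I would either (i) invoke the known Seidel/$\mathbb{Z}/n\mathbb{Z}$-action structure — the operator $\mathcal{T}$ should be invertible of order $n$ with $\mathcal{T}^n$ equal to multiplication by $q_1q_2^2\cdots q_{n-1}^{n-1}$ (this is forced once we know $\mathcal{T}$ shifts $u\mapsto u\uparrow 1$ and multiplies by the appropriate $q$) and appeal to the fact that such operators must be ``monomial'' in the Schubert basis, or better (ii) run the reduction directly: apply Proposition~\ref{quantum-classical} and Corollary~\ref{cored} repeatedly to strip $s_{n-1},s_{n-2},\ldots$ off $s_1\cdots s_{n-1}$ one simple reflection at a time, reducing $N_{s_1\cdots s_{n-1},u}^{w,\lambda}$ to a purely classical intersection number in $H^*(F\ell_n)$ of the shape $N_{\mathrm{id},v'}^{w',0}=\delta_{w',v'}$ — a classical structure constant with the identity, which is $1$ precisely when $w'=v'$ and $0$ otherwise, forcing uniqueness. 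The main obstacle, and where the real work lies, is bookkeeping the chain of reductions in (ii): at each step one must verify the hypotheses of Proposition~\ref{quantum-classical}(b) or Corollary~\ref{cored} (the right $\mathrm{sgn}_\alpha$ values), track how $\lambda$ decreases by $\alpha^\vee$'s, and confirm that the final classical invariant forces $w=s_1\cdots s_{n-1}u$; the two cases $u(n)=n$ versus $u(n)\neq n$ behave differently precisely because in the second case the stripping process consumes quantum parameters $q_l,\ldots,q_{n-1}$, which is exactly the source of the factor $q_{\lambda(u)}$. I expect the positivity of the $N$'s (equation~\eqref{positivity}) and the degree axiom~\eqref{degreeaxiom} to do most of the heavy lifting in ruling out spurious terms.
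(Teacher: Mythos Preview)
Your proposal identifies the right toolkit (Lemma~\ref{lemmaqterm}, Proposition~\ref{quantum-classical}, degree counts) but the two concrete routes you sketch both have gaps, and you are missing the paper's key lemma.

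Route (i) is circular: the Seidel $\mathbb{Z}/n\mathbb{Z}$-structure you want to invoke \emph{is} Theorem~\ref{Seidelelement}; you cannot assume $\mathcal{T}$ acts monomially to prove that it does. Route (ii) --- stripping $s_{n-1},s_{n-2},\ldots$ off $s_1\cdots s_{n-1}$ until you reach $N_{\mathrm{id},v'}^{w',0}$ --- runs into a genuine obstruction rather than mere bookkeeping. Since $\mathrm{sgn}_{\alpha_j}(s_1\cdots s_{n-1})=0$ for every $j<n-1$, the only simple root with which you can attack the first slot is $\alpha_{n-1}$; but Proposition~\ref{quantum-classical}(b) requires $\mathrm{sgn}_{\alpha_{n-1}}(u)+\mathrm{sgn}_{\alpha_{n-1}}(s_1\cdots s_{n-1})=2$, i.e.\ $u(n-1)>u(n)$, which is not guaranteed even when $u(n)\neq n$. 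Corollary~\ref{cored} does not help either, since it applies only at $\lambda=0$. So the chain of reductions you describe does not exist in general. (The Chevalley induction on $\ell(u)$ has a separate problem: $\sigma^{u'}\star\sigma^{s_i}$ produces several classical terms $\sigma^{v}$ all of length $\ell(u)$, and your hypothesis covers none of them except possibly $u$ itself.)

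The paper's proof goes in the opposite direction. It keeps $s_1\cdots s_{n-1}$ fixed in the first slot and reduces the \emph{second} slot. With $\lambda=\alpha_k^\vee+\cdots+\alpha_{n-1}^\vee$ and $\alpha=\alpha_k$ one has $\mathrm{sgn}_{\alpha_k}(s_1\cdots s_{n-1})=0$ and $\langle\alpha_k,\lambda\rangle=1$, so Proposition~\ref{quantum-classical}(a) \emph{forces} $\mathrm{sgn}_{\alpha_k}(u)=1$ and $\mathrm{sgn}_{\alpha_k}(w)=0$ whenever the invariant is nonzero --- the hypotheses for reduction are produced automatically rather than assumed. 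Iterating over $\alpha_k,\alpha_{k+1},\ldots,\alpha_{n-1}$ gives $N_{s_1\cdots s_{n-1},u}^{w,\lambda}=N_{s_1\cdots s_{n-1},\,us_k\cdots s_{n-1}}^{\,ws_k\cdots s_{n-1},\,0}$, a \emph{classical} invariant still involving $s_1\cdots s_{n-1}$. The endpoint is then evaluated by Lemma~\ref{lemmacupprod} (the cup-product formula $\sigma^{s_1\cdots s_{n-1}}\cup\sigma^v=\sigma^{s_1\cdots s_{n-1}v}$ if $v(n)=n$, else $0$), which is proved separately and which your proposal never invokes. That lemma, together with the observation $us_k\cdots s_{n-1}(n)=u(k)=n\iff k=l$, pins down $k=l$ and $w=s_1\cdots s_{n-1}u$ with coefficient $1$.
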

We will skip the proof of the above lemma and theorem first, the corresponding details for which  will be given in the next section. The above theorem inspires the following definition.

   \begin{definition}For    $u\in S_n$    and    $k\in\mathbb{Z}_{\geq 0}$, we define
    \begin{align}
     u\uparrow k:=(s_1s_2\cdots s_{n-1})^ku\in S_n,\qquad \lambda(u, k):=\sum_{j=0}^{k-1}\lambda(u\uparrow j).
  \end{align}
 The operator $\mathcal{T}:=\sigma^{s_1 s_2 \cdots s_{n-1}}\star$  is   called  a Seidel operator of    $QH^*(F\ell_n)$.
   \end{definition}
   From  Theorem    \ref{Seidelelement}, for any    $u\in S_n$    and    $k\in\mathbb{Z}_{\geq 0}$, in    $QH^*(F\ell_n)$    we have
    $$\mathcal{T}^k(\sigma^u)=q_{\lambda(u, k)} \sigma^{u\uparrow k}.$$    Note that    $u\uparrow k=u$    if and only if    $n|k$. Therefore  the operator
    $$\widehat{\mathcal{T}}: H^*(F\ell_n)\to H^*(F\ell_n); \quad \widehat{\mathcal{T}}(\sigma^u):=\mathcal{T}(\sigma^u)|_{\boldsymbol{q}=\boldsymbol{1}}=\sigma^{u\uparrow k}$$
    induced by     $\mathcal{T}$  generates a cyclic group   $\mathbb{Z}/n\mathbb{Z}$  action  on   $H^*(F\ell_n)$, which is called the Seidel representation.

   \begin{example}
   The permutation    $u\in S_n$    can be represented by a line of its image:    $u=\overline{u(1), \cdots, u(n)}$. For    $1\leq k\leq n$,    ${\rm id } \uparrow k=\overline{k+1, k+2, \cdots, n, 1, 2, \cdots, k}\in S_n$    exactly corresponds to the maximum partition $(k,k,\cdots,k)$ associated with the Grassmannian $Gr(n-k, n)$. By induction, $\lambda({\rm id}, k)-\lambda({\rm id}, k-1)=\lambda({\rm id} \uparrow (k-1))=\alpha^{\vee}_{n-1}+\alpha_{n-2}^\vee+\cdots+\alpha_{n-k+1}^\vee$. Therefore
    \begin{align}\label{Tact}
     \mathcal{T}^k(\sigma^{\rm id})=q_{n-1}^{k-1}q_{n-2}^{k-2}\cdots q_{n-k+1}\sigma^{{\rm id } \uparrow k},
  \end{align}
  where    $ \sigma^{{\rm id } \uparrow k}=\pi^*(P.D.[{\rm pt}])$    is the image of the highest degree Schubert class    $P.D.[{\rm pt}]$    in    $H^*(Gr(n-k, n))$    under the induced homomorphism    $\pi^*$    of the natural projection map    $\pi: F\ell_n\to Gr(n-k, n)$.
  \end{example}

  As an application of the Seidel operator, we can obtain the following quantum Pieri rule with respect to the  special Schubert class    $\sigma^{s_{n-m}  \cdots s_{n-2}s_{n-1}}$.
    \begin{theorem}   \label{thmQPR}    Let    $1\leq m\leq n-1$    and    $u \in S_{n}$.  Denote    $k:=n-u(n)$.  In  $QH^*(F\ell_{n})$,  we have
    \begin{align}
        \sigma^{s_{n-m}  \cdots s_{n-2}s_{n-1}}\star \sigma^{u}=q_1^{-1}q_{2}^{-2}\cdots q_{n-1}^{1-n}q_{\lambda(u, k)}\mathcal{T}^{n-k}(\sigma^{s_{n-m}  \cdots s_{n-2}s_{n-1}}\cup \sigma^{u\uparrow k}).
     \end{align}
     \end{theorem}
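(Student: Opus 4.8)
The plan is to use Theorem \ref{Seidelelement} together with the ``quantum $\to$ classical'' reduction (Proposition \ref{quantum-classical} and Corollary \ref{cored}) to move all the quantum information of the product $\sigma^{s_{n-m}\cdots s_{n-1}}\star\sigma^u$ into the known Seidel action $\mathcal{T}$. First I would recall from Theorem \ref{Seidelelement} and the definition of $\mathcal{T}$ that $\mathcal{T}^k(\sigma^u)=q_{\lambda(u,k)}\sigma^{u\uparrow k}$; since $k=n-u(n)$, the permutation $u\uparrow k$ satisfies $(u\uparrow k)(n)=u(n)+k=n$, so applying $\mathcal{T}$ exactly $k$ times ``lifts'' $u$ to a permutation fixing $n$, i.e. one pulled back from $F\ell_{1,\dots,n-1;n}\cong F\ell_{n-1}$. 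The idea is that after this lift the relevant product becomes classical in the fiber direction controlled by the simple root $\alpha_{n-1}$.

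The main computation is to rewrite $\sigma^{s_{n-m}\cdots s_{n-1}}\star\sigma^u$ by inserting $\mathcal{T}^{-k}\mathcal{T}^k$ in the sense of the cyclic group action. Concretely, since $\mathcal{T}$ is (after inverting the $q_i$) an invertible operator coming from multiplication by the invertible element $\sigma^{s_1\cdots s_{n-1}}$, one has
\begin{align*}
\sigma^{s_{n-m}\cdots s_{n-1}}\star\sigma^u
&=\mathcal{T}^{-k}\bigl(\sigma^{s_{n-m}\cdots s_{n-1}}\star\mathcal{T}^{k}(\sigma^u)\bigr)\\
&=q_{\lambda(u,k)}\,\mathcal{T}^{-k}\bigl(\sigma^{s_{n-m}\cdots s_{n-1}}\star\sigma^{u\uparrow k}\bigr),
\end{align*}
using that $\mathcal{T}$ commutes with quantum multiplication by $\sigma^{s_{n-m}\cdots s_{n-1}}$ (associativity and commutativity of $\star$). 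So everything reduces to computing $\sigma^{s_{n-m}\cdots s_{n-1}}\star\sigma^{u\uparrow k}$ where now the second factor is pulled back from $F\ell_{n-1}$. The key claim to establish is that this product is \emph{classical}, i.e. $\sigma^{s_{n-m}\cdots s_{n-1}}\star\sigma^{u\uparrow k}=\sigma^{s_{n-m}\cdots s_{n-1}}\cup\sigma^{u\uparrow k}$, which I would prove by a ${\rm sgn}_\alpha$-degree argument: for a nonzero Gromov--Witten invariant $N^{w,\lambda}_{s_{n-m}\cdots s_{n-1},\,u\uparrow k}$ with $\lambda\neq 0$, Lemma \ref{lemmaqterm}/\ref{lemmaqterm} forces $(u\uparrow k)(n)\neq n$, contradicting $(u\uparrow k)(n)=n$; hence only $\lambda=0$ contributes. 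Then $\mathcal{T}^{-k}$ applied to a \emph{classical} class is computed by running the Seidel action backwards: $\mathcal{T}^{-k}=\mathcal{T}^{n-k}$ up to the factor $q_{\lambda({\rm something},\,n)}$, and $\lambda(\cdot,n)=\alpha^\vee_1+2\alpha^\vee_2+\cdots$, accounting for the $q_1^{-1}q_2^{-2}\cdots q_{n-1}^{1-n}$ prefactor (compare equation \eqref{Tact} with ${\rm id}$, where $\lambda({\rm id},n)$ gives precisely these exponents).

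The step I expect to be the main obstacle is the bookkeeping of which $q$-powers appear when one converts $\mathcal{T}^{-k}$ into $\mathcal{T}^{n-k}$: one must check the cocycle-type identity $\lambda(u,k)+\lambda(u\uparrow k,\,n-k)=\lambda(u,n)$ together with the fact that $\lambda(u,n)$ is independent of $u$ and equals $\alpha^\vee_1+2\alpha^\vee_2+\cdots+(n-1)\alpha^\vee_{n-1}$ (this is essentially saying $\mathcal{T}^n$ is multiplication by the fixed monomial $q_1q_2^2\cdots q_{n-1}^{n-1}$, which is the statement that $(\sigma^{s_1\cdots s_{n-1}})^{\star n}=q_1q_2^2\cdots q_{n-1}^{n-1}$ in $QH^*(F\ell_n)$). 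Once that monomial identity is in hand—provable either by iterating Theorem \ref{Seidelelement} along the orbit of ${\rm id}$ as in \eqref{Tact}, or by a dimension count using $\langle 2\rho,\lambda\rangle=n(n-1)=\dim F\ell_n$—the rest is a direct substitution: $\mathcal{T}^{-k}=q_1^{-1}q_2^{-2}\cdots q_{n-1}^{1-n}\,\mathcal{T}^{n-k}$ as operators after localization, and plugging this into the displayed reduction above yields exactly the claimed formula with prefactor $q_1^{-1}q_2^{-2}\cdots q_{n-1}^{1-n}q_{\lambda(u,k)}$ and the classical cup product $\sigma^{s_{n-m}\cdots s_{n-1}}\cup\sigma^{u\uparrow k}$ inside $\mathcal{T}^{n-k}$.
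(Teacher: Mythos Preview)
Your proposal is correct and follows essentially the same route as the paper: use associativity/commutativity to move $\mathcal{T}^k$ onto $\sigma^u$, invoke Lemma \ref{lemmaqterm} (together with $(u\uparrow k)(n)=n$) to see that $\sigma^{s_{n-m}\cdots s_{n-1}}\star\sigma^{u\uparrow k}$ is classical, and then use $\mathcal{T}^n=q_1q_2^2\cdots q_{n-1}^{n-1}\cdot{\rm id}$ from \eqref{Tact} to trade $\mathcal{T}^{-k}$ for $q_1^{-1}\cdots q_{n-1}^{1-n}\,\mathcal{T}^{n-k}$. The only cosmetic difference is that the paper multiplies both sides by $q_1q_2^2\cdots q_{n-1}^{n-1}$ at the outset (i.e.\ applies $\mathcal{T}^n=\mathcal{T}^{n-k}\circ\mathcal{T}^k$ directly) rather than passing through the localized inverse $\mathcal{T}^{-k}$, so the cocycle identity you flag as a possible obstacle never actually enters.
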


   \begin{proof}
   Observe that    $v(n)=n$ when  $v:=u\uparrow k$. Based on the definition of the Seidel operator, equation \eqref{Tact}, and the commutativity and the associativity of the quantum product $\star $, we have
    $$\mathcal{T}^i(x\star y)=x\star \mathcal{T}^i(y),\qquad \mathcal{T}^n(x)=\mathcal{T}^n({\rm id})\star x=q_1q_2^2\cdots q_{n-1}^{n-1}x,$$
    for any $x, y\in QH^*(F\ell_n)$ and $i\in \mathbb{Z}_{\geq 0}$. Therefore,
    \begin{align*}
    q_1q_2^2\cdots q_{n-1}^{n-1} (\sigma^{s_{n-m}  \cdots s_{n-2}s_{n-1}}\star \sigma^{u})
     &= \mathcal{T}^{n-k}(\sigma^{s_{n-m}  \cdots s_{n-2}s_{n-1}}\star \mathcal{T}^k\sigma^{u}) \\
     &=  \mathcal{T}^{n-k}(\sigma^{s_{n-m}  \cdots s_{n-2}s_{n-1}}\star q_{\lambda(u, k)}\sigma^{v}) \\
      &=  \mathcal{T}^{n-k}(\sigma^{s_{n-m}  \cdots s_{n-2}s_{n-1}}\cup q_{\lambda(u, k)}\sigma^{v}),
 \end{align*}    the last equality in which is obtained by Lemma   \ref{lemmaqterm}.

 \end{proof}

    \begin{example}In    $S_5$,    $u=s_{2}s_{3}s_{4}s_{1}s_{2}s_{3}s_{1}=\overline{43512}$.  So    $k=5-u(5)=3$    and    $\lambda(u\uparrow 0)=\lambda(u)=\alpha_3^\vee+\alpha_4^\vee$.  We have    $u\uparrow 1=s_1s_2s_3s_4s_{2}s_{3}s_{4}s_{1}s_{2}s_{3}s_{1}=s_{3}s_{4}s_{2}s_{3}s_{1}s_{2}s_1$, so    $\lambda(u\uparrow 1)=\alpha_1^\vee+\alpha_2^\vee+\alpha_3^\vee+\alpha_4^\vee$;
   $u\uparrow 2=s_1s_2s_3s_4s_{3}s_{4}s_{2}s_{3}s_{1}s_{2}s_1= s_{4}s_{3}s_{2}$, so    $\lambda(u\uparrow 2)=\alpha_2^\vee+\alpha_3^\vee+\alpha_4^\vee$   ; hence    $u\uparrow 3=s_1=\overline{21345}$,    $q_{\lambda(u, 3)}=q_{\lambda(u\uparrow 0)+\lambda(u\uparrow 1)+\lambda(u\uparrow 2)}=q_1q_2^2q_3^3q_4^3$. By Theorem    \ref{thmQPR}, in    $QH^*(F\ell_5)$, we have
   \begin{align*}
          \sigma^{s_2s_3s_4}\star \sigma^{u}
         &=q_1^{-1}q_2^{-2}q_3^{-3}q_4^{-4}q_{\lambda(u, 3)}\mathcal{T}(\sigma^{s_2s_3s_4} \cup \sigma^{u\uparrow 3})) \\
         &=q_{4}^{-1} \mathcal{T}^2(\sigma^{s_2 s_3s_4} \cup \sigma^{s_1})    \\
         &=q_{4}^{-1} \mathcal{T}^2(\sigma^{s_2 s_3s_4s_1} + \sigma^{s_1 s_2 s_3 s_4})  \\
         &=q_{4}^{-1}  \mathcal{T}(q_{4}\sigma^{s_3 s_4s_1s_2s_3s_1}+ q_{4}\sigma^{ s_2 s_3 s_4s_1s_2 s_3}) \\
         &= q_3q_4 \sigma^{ s_4s_2s_3s_1s_2s_1 }+ q_3q_4 \sigma^{ s_3 s_4s_2s_3s_1s_2 }.
      \end{align*}     \end{example}

   \begin{problem}   \label{problemQH}    For    $u\in S_{n}, 1 \leq i < j <n-1$ , in    $QH^*(F\ell_n)$ , what are the necessary and sufficient conditions for    $\sigma^{s_i s_{i+1} \cdots s_{j}}\star \sigma^{u}=\sigma^{s_i s_{i+1} \cdots s_{j}}\cup \sigma^{u}$    (in terms of combinatorial information of $u$)?  \end{problem}

   \subsection{Proof of Theorem    \ref{Seidelelement}     }    We start by proving Lemma    \ref{lemmaqterm},
    \begin{proof}[ Proof of Lemma \ref{lemmaqterm} ] Note that    $\sigma^{s_{n-m}\cdots s_{n-1}}$    appears in the       quantum product    $(\sigma^{s_{n-1}})^{m}=\sigma^{s_{n-1}}\star\cdots \star \sigma^{s_{n-1}}$ ($m$ copies). By the non-negativity of the quantum Schubert structure constant, we have
    $N_{s_{n-m}\cdots s_{n-1}, u}^{w, \lambda}\neq 0$    only if    $q_\lambda\sigma^w$    appears in    $(\sigma^{s_{n-1}})^{m}\star \sigma^u$.  According to the quantum Chevalley formula (Proposition    \ref{quanchevalley}   ),
    $\lambda=\sum_{j=1}^{n-1}a_j\alpha_j^\vee$    satisfies (i)    $a_{n-1}\geq 1$   ; (ii) for any    $1\leq j<n-1$,    $0\leq a_j\leq a_{n-1}$, and if    $a_j\neq 0$    then    $a_{j+1}\neq 0$.  In particular, if $a_{n-1}=1$, then there exists  $1\leq l\leq n-1$    such that   $\lambda=\alpha_k^\vee+\cdots +\alpha_{n-1}^\vee$.

 If    $a_{n-1}= 2$, we denote   $j_0:=0$    and    $j_{\min}:=\max \{ j\mid a_{j-1}<2, a_j=2, 1\leq j\leq n-1 \} $. If    $a_{j_{\min}-1}=0$, then    $\lambda=\sum_{j=1}^{j_{\min}-2}a_{j}\alpha_{j}^{\vee}+2\alpha^\vee_{j_{\min}}+2\alpha^\vee_{j_{\min}+1}+\cdots+2\alpha^\vee_{n-1}$. Obviously
    $\mbox{sgn}_{\alpha_{j_{\min}}}(w)+\langle \alpha_{j_{\min}}, \lambda\rangle  >  \mbox{sgn}_{\alpha_{j_{\min}}}(s_{n-m}\cdots s_{n-1})+\mbox{sgn}_{\alpha_{j_{\min}}}(u)$. By Proposition    \ref{quantum-classical}    a), we have   $N_{s_{n-m}\cdots s_{n-1}, u}^{w, \lambda}= 0$.  Then we consider the case   $a_{j_{\min}-1}=1$. Note that
    $\langle \alpha_{j_{\min}}, \lambda\rangle=1$ and $\mbox{sgn}_{\alpha_{j_{\min}}}(s_{n-m}\cdots s_{n-1})=0$, hence we obtain
    $\mbox{sgn}_{\alpha_{j_{\min}}}(w)+\langle \alpha_{j_{\min}}, \lambda\rangle  \geq    \mbox{sgn}_{\alpha_{j_{\min}}}(s_{n-m}\cdots s_{n-1})+\mbox{sgn}_{\alpha_{j_{\min}}}(u)$, and  $N_{s_{n-m}\cdots s_{n-1}, u}^{w, \lambda}\neq 0$    only if the equality holds. That is,    $\ell(ws_{j_{\min}})>\ell(w)$    and $\ell(us_{j_{\min}})<\ell(u)$. By Proposition    \ref{quantum-classical}    a), we have
    $N_{s_{n-m}\cdots s_{n-1}, u}^{w, \lambda}=N_{s_{n-m}\cdots s_{n-1}, us_{j_{\min}}}^{ws_{j_{\min}}, \lambda-\alpha_{j_{\min}}^\vee}$.  By induction,    $N_{s_{n-m}\cdots s_{n-1}, u}^{w, \lambda}\neq 0$    only if a series of inequalities about    $u, w$    hold and
    $N_{s_{n-m}\cdots s_{n-1}, u}^{w, \lambda}=N_{s_{n-m}\cdots s_{n-1},  u'}^{w', \lambda-\alpha_{j_{\min}}^\vee-\cdots-\alpha_{n-2}^\vee}$.  However,    $\langle\alpha_{n-1}, \lambda-\alpha_{j_{\min}}^\vee-\cdots-\alpha_{n-2}^\vee\rangle =3$. Using Proposition    \ref{quantum-classical}    a) again,  we know that $N_{s_{n-m}\cdots s_{n-1},  u'}^{w', \lambda-\alpha_{j_{\min}}^\vee-\cdots-\alpha_{n-2}^\vee}$ must be equal to    $0$.  Therefore, we always obtain
    $N_{s_{n-m}\cdots s_{n-1}, u}^{w, \lambda}=0$.

 If    $a_{n-1}\geq 3$ , then    $\mbox{sgn}_{\alpha_{n-1}}(w)+\langle \alpha, \lambda\rangle \geq 0+3>2\geq \mbox{sgn}_{\alpha_{n-1}}(s_{n-m}\cdots s_{n-1})+\mbox{sgn}_{\alpha_{n-1}}(u)$. By Proposition    \ref{quantum-classical}    a), we have    $N_{s_{n-m}\cdots s_{n-1}, u}^{w, \lambda}= 0$.

 For    $\lambda=\alpha_k^\vee+\cdots +\alpha_{n-1}^\vee$, we assume    $u(n)=n$.   If $k<n-1$, we repeat the argument  for the  case of  $a_{n-1}=2, a_{j_{\min}-1}=1$.
  Then  $N_{s_{n-m}\cdots s_{n-1}, u}^{w, \lambda}\neq 0$    only if a series of inequalities about    $u, w$    hold and     $N_{s_{n-m}\cdots s_{n-1}, u}^{w, \lambda}=N_{s_{n-m}\cdots s_{n-1}, u'}^{w', \alpha_{n-1}^\vee}$.  Since    $u'(n)=us_ks_{k+1}\cdots s_{n-2}(n)=u(n)=n$ ,    $\mbox{sgn}_{\alpha_{n-1}}(u')=0$. Thus
    $\mbox{sgn}_{\alpha_{n-1}}(w')+\langle \alpha_{n-1}, \alpha_{n-1}^\vee\rangle \geq 0+2>1= \mbox{sgn}_{\alpha_{n-1}}(s_{n-m}\cdots s_{n-1})+\mbox{sgn}_{\alpha_{n-1}}(u')$.  By Proposition    \ref{quantum-classical}    a), we obtain    $N_{s_{n-m}\cdots s_{n-1}, u'}^{w', \alpha_{n-1}^\vee}= 0$.  \end{proof}

   \begin{lemma}   \label{lemmacupprod}    In    $H^*(F\ell_{n})$, for any    $u \in S_{n}$, we have
    \begin{align*}
   \sigma^{s_1 s_2 \cdots s_{n-1}}\cup \sigma^u &= \left \{
\begin{array}{rl}
 \sigma^{s_1 s_2 \cdots s_{n-1}u},& ~ \text{if } u(n)=n; \\
 0,&  ~   \mbox{if } u(n)\neq n.
\end{array} \right.
\end{align*}     \end{lemma}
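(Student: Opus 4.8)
The plan is to exploit the fibre bundle $\pi\colon F\ell_n\to Gr(n-1,n)$, whose fibre is $F\ell_{n-1}$. Since $c:=s_1s_2\cdots s_{n-1}=\overline{2,3,\dots,n,1}$ is the longest element of $W^{P_{\alpha_{n-1}}}$, the Schubert variety $X^c\subset Gr(n-1,n)$ is a single point, so $\sigma^c=\pi^*[\mathrm{pt}]$ is the class $[\pi^{-1}(H)]$ of a fibre, where $H\subset\mathbb{C}^n$ is a hyperplane and $\pi^{-1}(H)\cong F\ell_{n-1}$. Writing $\iota\colon\pi^{-1}(H)\hookrightarrow F\ell_n$ for a fibre inclusion, the projection formula gives $\sigma^c\cup\sigma^u=\iota_*(1)\cup\sigma^u=\iota_*(\iota^*\sigma^u)$. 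So it suffices to compute the restriction $\iota^*\sigma^u\in H^*(F\ell_{n-1})$ (a map that does not depend on the chosen fibre, the base being simply connected) and then to push it forward.

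To compute $\iota^*\sigma^u$ I would move $X^u$ to a Schubert variety $X^u(F^\bullet)$ with respect to a general complete flag $F^\bullet$, so that by Kleiman transversality $\iota^*\sigma^u=[\,X^u(F^\bullet)\cap\pi^{-1}(H)\,]$; this intersection is the locus of flags $V_\bullet$ with $V_{n-1}=H$ lying in $X^u(F^\bullet)$. In the rank description $X^u(F^\bullet)=\{V_\bullet:\dim(V_p\cap F^q)\ge\#\{i\le p:u(i)>n-q\}\ \forall\,p,q\}$, the conditions at $p=n-1$ read $\dim(H\cap F^q)\ge q-\epsilon_q$, where $\epsilon_q=1$ if $u(n)>n-q$ and $\epsilon_q=0$ otherwise (since $\#\{i\le n:u(i)>n-q\}=q$). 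For a general $H$ one has $\dim(H\cap F^q)=q-1$ whenever $q\le n-1$, so these conditions are satisfiable exactly when $u(n)>n-q$ for all such $q$, that is, when $u(n)=n$. In fact, if $u(n)\ne n$ then already the case $q=1$ forces the fixed line $F^1$ to lie in $V_{n-1}=H$, which fails for a general $H$; hence the intersection is empty and $\sigma^c\cup\sigma^u=0$. If $u(n)=n$, the $p=n-1$ conditions hold with equality, $F^\bullet$ induces a general complete flag in $H$, and the remaining conditions at $p\le n-2$ cut out precisely a Schubert variety of $F\ell(H)$ of index $\bar u:=u|_{\{1,\dots,n-1\}}\in S_{n-1}$; hence $\iota^*\sigma^u=\sigma^{\bar u}$.

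It remains to show $\iota_*\sigma^{\bar u}=\sigma^{cu}$. Since $\iota^*\sigma^u$ does not depend on the fibre, I may take $\iota$ to be the coordinate inclusion $F\ell(\langle e_1,\dots,e_{n-1}\rangle)\hookrightarrow F\ell_n$, which carries the Schubert cell of $v\in S_{n-1}$ into the Schubert cell of $v$ in $F\ell_n$; thus the homology pushforward sends $[X_v]$ to $[X_v]$, and translating through Poincaré duality on both sides yields $\iota_*\sigma^{\bar u}=\sigma^{w_0^{(n)}w_0^{(n-1)}\bar u}$, where $w_0^{(m)}$ denotes the longest element of $S_m$. As $w_0^{(n)}w_0^{(n-1)}=\overline{2,3,\dots,n,1}=c$, this equals $\sigma^{c\bar u}=\sigma^{cu}$. (Alternatively, one can argue algebraically: $c$ is a dominant permutation with code $(1,\dots,1,0)$, so its Schubert polynomial is $\mathfrak{S}_c=x_1\cdots x_{n-1}=e_{n-1}(x_1,\dots,x_{n-1})$, and for $u$ fixing $n$ one has $\mathfrak{S}_{cu}=x_1\cdots x_{n-1}\,\mathfrak{S}_u$; this last identity follows by downward induction on $\ell(u)$, using that $e_{n-1}(x_1,\dots,x_{n-1})$ is $s_i$-symmetric for $i<n-1$ and hence commutes with $\partial_i$.)

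The case $u(n)\ne n$ is the easy half, handled entirely by the emptiness of the intersection. The two points that require care are: making the rank-count and transversality step rigorous — fixing a convention for $X^u$ and checking, via the explicit rank function, that a general fibre meets $X^u$ in exactly the asserted Schubert variety of $F\ell_{n-1}$ rather than in something of smaller dimension (Kleiman's theorem supplies the transversality, the rank count supplies the identification) — and the final step $\iota_*\sigma^{\bar u}=\sigma^{cu}$, matching the pushforward through the fibre inclusion with the permutation $cu$. I expect the latter to be the principal obstacle, with the Schubert-polynomial identity $\mathfrak{S}_{cu}=x_1\cdots x_{n-1}\,\mathfrak{S}_u$ the cleanest route around it.
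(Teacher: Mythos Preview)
Your argument is correct, and it proceeds along genuinely different lines from the paper. The paper proves Lemma~\ref{lemmacupprod} by the same combinatorial reduction machinery it develops in \S2.3--2.4: for $u(n)=n$ it uses Bruhat-order constraints together with Corollary~\ref{cored} (which is itself a consequence of the $\mathrm{sgn}_\alpha$ inequalities in Proposition~\ref{quantum-classical}) to peel simple reflections off $u$ and $w$ simultaneously, reducing $N_{c,u}^{w,0}$ step by step to $N_{c,\mathrm{id}}^{wu^{-1},0}$; for $u(n)\neq n$ it factors through the injection $H^*(Gr(n-1,n))\hookrightarrow H^*(F\ell_n)$ and the vanishing $\sigma^{c}\cup\sigma^{u_{j_{n-1}}^{(n-1)}}=0$ in $H^*(\mathbb{P}^{n-1})$. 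Your proof instead reads $\sigma^c=\pi^*[\mathrm{pt}]=\iota_*(1)$ for the fibration $\pi\colon F\ell_n\to Gr(n-1,n)$, applies the projection formula, and computes $\iota^*\sigma^u$ geometrically via rank conditions and Kleiman transversality; the pushforward step $\iota_*\sigma^{\bar u}=\sigma^{w_0^{(n)}w_0^{(n-1)}\bar u}=\sigma^{cu}$ (or equivalently the Schubert-polynomial identity $\mathfrak{S}_{cu}=e_{n-1}\mathfrak{S}_u$) finishes it.

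What each approach buys: the paper's route illustrates that Corollary~\ref{cored} alone suffices, keeping everything internal to the $\mathbb{Z}^2$-filtration framework that drives the rest of the article; your route is self-contained, bypasses that machinery entirely, and makes transparent the geometric reason for both the vanishing (the image $\pi(X^u)$ misses a general point of $\mathbb{P}^{n-1}$ exactly when $u(n)\neq n$) and the single-term answer (restriction to a fibre followed by Gysin pushforward). The Schubert-polynomial alternative you sketch is the shortest of all and is essentially a classical fact.
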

    \begin{proof}We have the reduced expression   $u=u_{j_{n-1}}^{(n-1)}\cdots u_{j_2}^{(2)}u_{j_1}^{(1)}$.

 Fristly, consider the case of    $u(n)=n$. This means   $j_{n-1}=0$.    We observe that    $N_{s_1s_2\cdots s_{n-1}, u}^{w, 0}\neq 0$     only if
$\ell(w)=\ell(u)+n-1$  and $s_1s_2\cdots s_{n-1}\leq w$ holds with the Bruhat order. Thus there exists a subsequence of length $n-1$  with product $s_1s_2\cdots s_{n-1}$ in  reduced expression $w=u_{i_{n-1}}^{(n-1)}\cdots u_{i_1}^{(1)}$.
Therefore $i_{n-1}=n-1$.  Note that for any $1\leq k\leq n-2$,
    $\mbox{sgn}_{\alpha_k}(s_1\cdots s_{n-1})=0$. Without loss of generality, suppose $i_1=1$, that is,   $\mbox{sgn}_{\alpha_1}(w)=1$.

   If $\mbox{sgn}_{\alpha_1}(u)=0$, by Proposition \ref{quantum-classical} a), we have $N_{s_1s_2\cdots s_{n-1}, u}^{w, 0}= 0$. Otherwise $\mbox{sgn}_{\alpha_1}(u)=1$ (that is, $\ell(us_1)=\ell(u)-1$), as a consequence of Corollary \ref{cored}, $N_{s_1s_2\cdots s_{n-1}, u}^{w, 0}=  N_{s_1s_2\cdots s_{n-1}, us_1}^{ws_1, 0}$.
   By induction with $\ell(w)-n+1$, $N_{s_1s_2\cdots s_{n-1}, u}^{w, 0}=N_{s_1s_2\cdots s_{n-1}, \rm id}^{wu^{-1}, 0}$ when $\ell(wu^{-1})=\ell(w)-\ell(u^{-1})$ holds, otherwise  $N_{s_1s_2\cdots s_{n-1}, u}^{w, 0}=0$. If $\ell(wu^{-1})=\ell(w)-\ell(u^{-1})$ and $N_{s_1s_2\cdots s_{n-1}, \rm id}^{wu^{-1}, 0} \neq 0$, then $wu^{-1}=s_1\cdots s_{n-1}$. We conclude
   $\sigma^{s_1 s_2 \cdots s_{n-1}}\cup \sigma^u =\sigma^{s_1 s_2 \cdots s_{n-1}u}$.

 If    $u(n)\neq n$, then    $j_{n-1}>0$.  By Corollary  \ref{cored}, we obtain $\sigma^{u_{j_{n-1}}^{(n-1)}}\cup \sigma^{u_{j_{n-2}}^{(n-2)}\cdots u_{j_2}^{(2)}u_{j_1}^{(1)}}=\sigma^u+\mbox{ other terms}$.
 Due to the nonnegativity of Schubert structure constant, the nonzero term of
$\sigma^{s_1 s_2 \cdots s_{n-1}}\cup \sigma^u $ will appear in parts of the  nonzero terms of  $(\sigma^{s_1s_2\cdots s_{n-1}}\cup (\sigma^{u_{j_{n-1}}^{(n-1)}}\cup \sigma^{u_{j_{n-2}}^{(n-2)}\cdots u_{j_2}^{(2)}u_{j_1}^{(1)}})$.
 Due to the induced injective homomorphism $H^*(Gr(n-1, n))\to H^*(F\ell_n)$, we have $\sigma^{s_1s_2\cdots s_{n-1}}\cup \sigma^{u_{j_{n-1}}^{(n-1)}}=0$. Therefore $\sigma^{s_1 s_2 \cdots s_{n-1}}\cup \sigma^u=0$.
   \end{proof}

   \bigskip

   \begin{proof}[ Proof of Theorem \ref{Seidelelement}]If    $u(n)=n$ , by Lemma    \ref{lemmaqterm},    $\sigma^{s_1\cdots s_{n-1}}\star \sigma^u=\sigma^{s_1\cdots s_{n-1}}\cup \sigma^u$    has no non-zero quantum terms. By Lemma    \ref{lemmacupprod},   it is equal to    $\sigma^{s_1\cdots s_{n-1}u}$.

If    $u(n)\neq n$ , by Lemma    \ref{lemmacupprod}    and Lemma    \ref{lemmaqterm},    $\sigma^{s_1\cdots s_{n-1}}\star \sigma^u$    has only quantum terms  $q_\lambda\sigma^w$ of the form    $\lambda=\alpha_k^\vee+\cdots +\alpha_{n-1}^\vee$. By Proposition    \ref{quantum-classical},    we have
   $N_{s_1 s_2 \cdots s_{n-1}, u}^{w, \lambda}\neq 0$     only if    $\ell(ws_{k}s_{k+1}\cdots s_{n-2}s_{n-1})-\ell(w)=n-k$    and
   $\ell(u)-\ell(us_{k}s_{k+1}\cdots s_{n-1})=n-k$.  Then we have
   $$ N_{s_1 s_2 \cdots s_{n-1}, u}^{w, \lambda}=N_{s_1 s_2 \cdots s_{n-1}, us_{k}s_{k+1}\cdots s_{n-1}}^{ws_{k}s_{k+1}\cdots  s_{n-1},0 }.$$    In particular, we have    $\ell(us_k)=\ell(u)-1$ , so    $k\geq l$.  Note that    $us_{k}s_{k+1}\cdots s_{n-1}(n)=u(k)=n$   if and only if     $k=l$.  As a consequence of  Lemma    \ref{lemmacupprod},
   $N_{s_1 s_2 \cdots s_{n-1}, us_{k}s_{k+1}\cdots s_{n-1}}^{ws_{k}s_{k+1}\cdots  s_{n-1},0 }\neq 0$    if and only if    $k=l$    and    $ws_{l}s_{l+1}\cdots s_{n-2}s_{n-1}=s_1 s_2 \cdots s_{n-1}us_{l}s_{l+1}\cdots s_{n-2}s_{n-1}$. Therefore, we have    $$N_{s_1 s_2 \cdots s_{n-1}, u}^{w, \lambda}=\begin{cases}
  1,&\mbox{if } \lambda=\lambda(u) \mbox{ and ~} w=s_1\cdots s_{n-1}u, \\
  0,&\mbox{otherwise.}
\end{cases}$$     \end{proof}

   \subsection{Discussion at Quantum    $K$-theory level  }

The    $K$-theory    $K(G/P)$    of the flag varieties   $G/P$    is a Gothendieck group generated by the isomorphism class    $[E]$    composed of algebraic vector bundles on  $G/P$. The additive structure and multiplicative structure  of  $K(G/P)$  are given $[E]+[F]:=[E \oplus F]$ and $[E]\cdot [F]:=[E \otimes F]$, respectively. We simply denote the Schubert class   $\mathcal{O}^w_P:=[\mathcal{O}_{X^w}]$,  and note $K(G/P)=\bigoplus_{w\in W^P}\mathbb{Z}\mathcal{O}^w$. In general, the quantum product of the quantum    $K$    theory is a formal power series with respect to  quantum parameters.  But for the quantum $K$-theory $QK(G/P)$ of flag varieties , Anderson-Chen-Tseng \cite{ACT} showed that the quantum product of any two Schubert classes is still a polynomial in the  quantum variable. Recall   $\Delta_P=\Delta\setminus\{\alpha_{n_1}, \cdots, \alpha_{n_k}\}$. As a consequence, $QK(G/P)=(K(G/P)\otimes \mathbb{C}[q_{n_1},\cdots, q_{n_k}], *)$. The quantum product
    $$\mathcal{O}^{u} \ast \mathcal{O}^{v}=\sum_{w\in W^P,  \lambda_P \in Q^\vee/Q^\vee_P)}\kappa_{u,v}^{w,\lambda}q_{\lambda_P} \mathcal{O}^{w}, $$    is determined by the structure constants   $\kappa_{u,v}^{w,\lambda_P}$,  which is (complicated and signed) combination of $K$-theoretic genus-zero 3-pointed (and 2-pointed) Gromov-Witten invariants (see e.g. \cite{BuMi, BCLM} for more details). We notice the following facts:
    $$\kappa_{u,v}^{w,\lambda_P}=N_{u,v}^{w,\lambda_P}\quad \mbox{holds whenever }\quad\ell(u)+\ell(v)=\ell(w)+\langle c_1(G/P), \lambda_P\rangle.$$
    Combining the properties and applications of the Seidel operator in the quantum    $K$-theory of the Grassmannian   \cite{LLSY, BCP23}    with the discussion in this section on    $QH^*(F\ell_n)$,  we conjecture that there are the same expressions in quantum $K$-theory about Theorem
 \ref{Seidelelement} and Theorem \ref{thmQPR}.

    \begin{conjecture}   \label{conjQK}
   Let    $1\leq m\leq n-1$    and    $u \in S_{n}$. Denote    $k:=n-u(n)$.  In    $QK(F\ell_{n})$,  we  have
    \begin{align}
      \mathcal{T}(\mathcal{O}^u):=\mathcal{O}^{s_{1}  \cdots s_{n-2}s_{n-1}} * \mathcal{O}^{u}&=q_{\lambda(u)} \mathcal{O}^{u\uparrow 1}, \\
    \mathcal{O}^{s_{n-m}  \cdots s_{n-2}s_{n-1}}* \mathcal{O}^{u}&=q_1^{-1}q_{2}^{-2}\cdots q_{n-1}^{1-n}q_{\lambda(u, k)}\mathcal{T}^{n-k}(\mathcal{O}^{s_{n-m}  \cdots s_{n-2}s_{n-1}}\cdot  \mathcal{O}^{u\uparrow k}).
     \end{align}
     \end{conjecture}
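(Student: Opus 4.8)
The plan is to prove Conjecture \ref{conjQK} by following the exact structure of the cohomological proof in Section 3.2, replacing each ingredient by its $K$-theoretic counterpart. The key observation that makes this plausible is the ``stability'' fact recorded just before the conjecture: $\kappa_{u,v}^{w,\lambda_P}=N_{u,v}^{w,\lambda_P}$ whenever $\ell(u)+\ell(v)=\ell(w)+\langle c_1(G/P),\lambda_P\rangle$. In other words, the ``top-degree'' part of the quantum $K$-product agrees with the quantum cohomology product; the genuinely new phenomena in $QK$ are the lower-order (dimension-dropping) corrections. So the first step is to rerun the Chevalley-type analysis: one needs a quantum $K$-theoretic Chevalley formula for $QK(F\ell_n)$ (due to Lenart--Shimozono, Lenart--Postnikov, or in the form used by Buch--Mihalcea / Kato) to control which monomials $q_\lambda\mathcal{O}^w$ can possibly occur in $(\mathcal{O}^{s_{n-1}})^{*m}*\mathcal{O}^u$. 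Combined with a $K$-theoretic ``quantum $\to$ classical'' type statement — here one would invoke the functoriality results of \cite{BCLM, Kato} and/or a Peterson-type comparison — the goal is to show that the only surviving quantum terms have $\lambda=\alpha_k^\vee+\cdots+\alpha_{n-1}^\vee$, exactly as in Lemma \ref{lemmaqterm}, and moreover with coefficient exactly $1$ (no $K$-theoretic correction, because the relevant invariant is forced to be ``on the nose'' in the degree bookkeeping).

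The second step is the $K$-theoretic analogue of Lemma \ref{lemmacupprod}: computing $\mathcal{O}^{s_1s_2\cdots s_{n-1}}\cdot\mathcal{O}^u$ in $K(F\ell_n)$. For this one uses the Lenart--Sottile Pieri rule \cite{LeSo} for the special Schubert class $\mathcal{O}^{s_1\cdots s_{n-1}}$ (which is the pullback of the structure sheaf of a point-codimension Schubert variety on $Gr(n-1,n)$, equivalently of $\mathcal{O}(1)$-type class under $\pi:F\ell_n\to Gr(n-1,n)$). The expectation is that $\mathcal{O}^{s_1\cdots s_{n-1}}\cdot\mathcal{O}^u$ equals $\mathcal{O}^{s_1\cdots s_{n-1}u}$ when $u(n)=n$ and vanishes when $u(n)\neq n$, by the same pullback argument: the class is supported on the pullback of $[\mathcal{O}_{\mathrm{pt}}]\in K(Gr(n-1,n))$, and $[\mathcal{O}_{\mathrm{pt}}]^2=0$ there forces the vanishing. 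Granting Steps 1 and 2, the Seidel identity $\mathcal{T}(\mathcal{O}^u)=q_{\lambda(u)}\mathcal{O}^{u\uparrow 1}$ follows verbatim as in the proof of Theorem \ref{Seidelelement}, and then the Pieri formula (3.9) follows formally from commutativity/associativity of $*$ and the relation $\mathcal{T}^n(\mathcal{O}^{\mathrm{id}})=q_1q_2^2\cdots q_{n-1}^{n-1}\mathcal{O}^{\mathrm{id}}$, exactly as in the proof of Theorem \ref{thmQPR}, provided one also checks that $\mathcal{O}^{s_{n-m}\cdots s_{n-1}}*\mathcal{O}^{v}=\mathcal{O}^{s_{n-m}\cdots s_{n-1}}\cdot\mathcal{O}^{v}$ when $v(n)=n$ (the $K$-analogue of the last line of that proof), which is again supplied by the $K$-theoretic version of Lemma \ref{lemmaqterm}.

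The hard part will be Step 1, specifically establishing that the quantum corrections in $\mathcal{O}^{s_1\cdots s_{n-1}}*\mathcal{O}^u$ carry coefficient exactly $q_{\lambda(u)}$ with no lower-order tail. In quantum cohomology this was automatic from the degree axiom \eqref{degreeaxiom} (which kills all but one term), but $QK$ has no such rigid grading: a priori $\mathcal{O}^{s_1\cdots s_{n-1}}*\mathcal{O}^u$ could have a whole polynomial of correction terms $q_\lambda\mathcal{O}^w$ with $\ell(w)<\ell(s_1\cdots s_{n-1}u)$. The strategy to rule these out is twofold: first use the $K$-theoretic Chevalley/Seidel dichotomy of \cite{LLSY, BCP23} on each Grassmannian projection $F\ell_n\to Gr(n-k,n)$ together with the surjective algebra homomorphisms $QK(F\ell_n)\to QK(G/P)$ from \cite{BCLM, Kato} to constrain the supports; second, exploit that $\mathcal{O}^{s_1\cdots s_{n-1}}*$ must be invertible (its cohomological shadow is the invertible Seidel operator $\mathcal{T}$, and $\mathcal{T}^n$ is multiplication by a unit $q_1q_2^2\cdots q_{n-1}^{n-1}$), which forces $\mathcal{T}$ to permute a natural basis up to units — a rigidity argument of the type used by Belkale and in \cite{LLSY} for the Grassmannian. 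If one can show $\mathcal{T}^n = (q_1q_2^2\cdots q_{n-1}^{n-1})\cdot\mathrm{id}$ directly in $QK(F\ell_n)$ — e.g. from the quantum $K$ Chevalley formula applied $n(n-1)/2$ times, or from the cyclic $\mathbb{Z}/n\mathbb{Z}$-symmetry of $F\ell_n$ — then, together with the no-negative-power constraint and positivity/alternating-sign properties of the $\kappa$'s, the monomial $q_{\lambda(u)}\mathcal{O}^{u\uparrow 1}$ is pinned down uniquely and the conjecture follows. Verifying that this rigidity genuinely eliminates all tails (rather than merely constraining them) is where the real work lies, and Example \ref{exevidence} together with the Buch--Mihalcea consistency check \cite{BuMi} is the evidence that it does.
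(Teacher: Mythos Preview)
The statement is labeled \textbf{Conjecture}~\ref{conjQK} in the paper, not a theorem: the authors do not prove it. They propose it by analogy with Theorem~\ref{mainthmQH} and offer only Example~\ref{exevidence} as numerical evidence (consistency with the Buch--Mihalcea Pieri rule on $QK(Gr(3,6))$ via Kato's pushforward). There is therefore no proof in the paper to compare your attempt against.

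Your outline correctly mirrors the cohomological architecture of Section~3.2, and you are candid that the crux is Step~1: ruling out the lower-degree $K$-theoretic tails in $\mathcal{O}^{s_1\cdots s_{n-1}}*\mathcal{O}^u$. But the two mechanisms you propose for this do not close the gap. First, knowing that $\mathcal{T}^n$ is multiplication by the unit $q_1q_2^2\cdots q_{n-1}^{n-1}$ does \emph{not}, by itself, force $\mathcal{T}$ to send each Schubert class to a single monomial times a Schubert class; a finite-order invertible operator on a free module can have an arbitrarily complicated matrix in the Schubert basis. In the Grassmannian case \cite{LLSY,BCP23} the rigidity is obtained from specific geometric input (curve-neighborhood identifications of Seidel products, or direct $K$-Chevalley computations), not from invertibility alone, and you would need to supply the analogous input for $F\ell_n$. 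Second, the ``quantum $\to$ classical'' machinery you invoke --- a $K$-theoretic analogue of Proposition~\ref{quantum-classical} and of the Peterson--Woodward comparison --- is not established in the literature; Kato's map (Proposition~\ref{surj}) is a pushforward $QK(G/B)\to QK(G/P)$ and does not furnish a comparison of individual structure constants in the direction your Step~1 needs. These missing ingredients are precisely why the paper records Conjecture~\ref{conjQK} as a conjecture rather than a theorem.
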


Assuming that the above conjecture is correct,  we can  obtain  parts of  quantum products of the quantum    $K$-theory  by calculating   classical products in  $K(G/P)$, similar to the case of quantum cohomology. Furthermore, for any    $w\in W$, there is a unique    $(w', w'')\in W^P\times W_P$    such that    $w=w'w''$. In    \cite{Kato}, Kato proved that there are  the following ``functoriality" in quantum $K$ theory of flag vaieties $G/P$  with the natural projection map   $\pi: G/B\to G/P$    (see also \cite{BCLM} for the special case $G/P\to G/G={\rm pt}$).
    \begin{proposition}[Theorem A of \cite{Kato}]   \label{surj}    The following map is surjective algebra homomorphism:
    $$\pi_*: QK(G/B)\to QK(G/P);\quad  \pi_*(\mathcal{O}^w_B)=\mathcal{O}_P^{w'},\quad \pi_*(q_i)=\begin{cases}
      1,&\mbox{if }\alpha_i\in \Delta_P, \\
      q_i,&\mbox{if }\alpha_i\notin \Delta_P.
    \end{cases}$$     \end{proposition}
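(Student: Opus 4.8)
The plan is as follows. The statement splits into two parts: that $\pi_*$ is a well defined surjection, and that it is multiplicative. The first part is immediate from the definition. The map $\pi_*$ carries the $\mathbb{C}[q_1,\dots,q_{n-1}]$-basis $\{\mathcal{O}^w_B\}_{w\in W}$ of $QK(F\ell_n)$ onto the $\mathbb{C}[q_{n_1},\dots,q_{n_k}]$-basis $\{\mathcal{O}^{w'}_P\}_{w'\in W^P}$ of $QK(G/P)$ --- the class $\mathcal{O}^{w'}_P$ being already the image of $\mathcal{O}^{w'}_B$, since $w'=w'\cdot e$ --- it is semilinear over the ring map $\mathbb{C}[q_1,\dots,q_{n-1}]\to\mathbb{C}[q_{n_1},\dots,q_{n_k}]$ sending $q_i\mapsto 1$ for $\alpha_i\in\Delta_P$ and $q_i\mapsto q_i$ otherwise, and $\pi_*(\mathcal{O}^e_B)=\mathcal{O}^e_P=1$. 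So the whole content is the identity $\pi_*(x*y)=\pi_*(x)*\pi_*(y)$.

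For this I would first make an elementary reduction. The algebra $QK(F\ell_n)$ is generated over $\mathbb{C}[q_1,\dots,q_{n-1}]$ by the special Schubert classes $\mathcal{O}^{s_1},\dots,\mathcal{O}^{s_{n-1}}$, lifting the classical generation of $K(F\ell_n)$ by line bundles. Now a $\mathbb{C}[q]$-semilinear map $f$ with $f(1)=1$ out of a $\mathbb{C}[q]$-algebra generated by a set $S$ is automatically a ring homomorphism as soon as $f(s*x)=f(s)*f(x)$ holds for every $s\in S$ and every $x$ in the algebra (induct on the length of a monomial in $S$). Taking $S=\{\mathcal{O}^{s_i}\}_i$ and letting $x=\mathcal{O}^u_B$ run over Schubert classes, the proposition reduces to the family of identities
\begin{equation}\label{piChev}
\pi_*\bigl(\mathcal{O}^{s_i}_B*\mathcal{O}^u_B\bigr)=\pi_*\bigl(\mathcal{O}^{s_i}_B\bigr)*\pi_*\bigl(\mathcal{O}^u_B\bigr),\qquad 1\le i\le n-1,\ u\in S_n,
\end{equation}
where the right-hand side equals $\mathcal{O}^{s_i}_P*\pi_*(\mathcal{O}^u_B)$ when $\alpha_i\notin\Delta_P$ (so $s_i\in W^P$) and equals $\pi_*(\mathcal{O}^u_B)$ when $\alpha_i\in\Delta_P$ (so $s_i\in W_P$ and $\pi_*(\mathcal{O}^{s_i}_B)=\mathcal{O}^e_P=1$).

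To prove \eqref{piChev} I would expand $\mathcal{O}^{s_i}_B*\mathcal{O}^u_B$ by the quantum $K$-theoretic Chevalley formula for $QK(F\ell_n)$ --- the $QK$ analogue of Proposition \ref{quanchevalley}, a signed sum over Bruhat covers of $u$ together with explicit quantum corrections carrying monomials $q_\lambda$ --- apply $\pi_*$ term by term, and match with the corresponding expansion of the right-hand side. The point to keep in mind, already visible for $F\ell_2\to\{\mathrm{pt}\}$, is that the classical $K$-theoretic pushforward is \emph{not} multiplicative: there $\mathcal{O}^{s_1}\cdot\mathcal{O}^{s_1}=0$ whereas $\pi_*(\mathcal{O}^{s_1})\cdot\pi_*(\mathcal{O}^{s_1})=1$, and it is the quantum relation $\mathcal{O}^{s_1}*\mathcal{O}^{s_1}=q_1$, together with the substitution $q_1\mapsto1$, that repairs it. Thus in \eqref{piChev} the classical projection formula --- valid because the fibre $P/B$ is an iterated $\mathbb{P}^1$-bundle, whence $R\pi_*\mathcal{O}_{X^w_B}=\mathcal{O}_{X^{w'}_P}$, the $\boldsymbol{q}=\boldsymbol{0}$ case of the stated formula --- only accounts for part of the terms; the quantum corrections along the fibre directions must combine, after the specialization $q_i\mapsto1$ ($\alpha_i\in\Delta_P$), precisely with the remaining classical terms. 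Making this precise amounts to a $K$-theoretic Peterson--Woodward comparison, relating the signed combinations of genus-zero $K$-theoretic Gromov--Witten invariants of $F\ell_n$ occurring on the left to the corresponding invariants of $G/P$ after that specialization.

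The main obstacle is exactly this $K$-theoretic comparison and the bookkeeping it demands. In contrast with the cohomological situation of Section 2 --- where the degree axiom \eqref{degreeaxiom} pins the admissible $q$-powers down and makes the ``quantum $\to$ classical'' reduction clean --- $QK$ carries no grading constraining the $q$-exponents, its structure constants $\kappa_{u,v}^{w,\lambda}$ are \emph{signed} combinations of two- and three-point $K$-theoretic Gromov--Witten invariants, and the quantum corrections do not truncate; one is therefore forced to argue geometrically, controlling $R\pi_*$ of the structure sheaves of the relevant Gromov--Witten subvarieties (and of the moduli spaces $\overline{\mathcal{M}_{0,n}}(F\ell_n,\lambda)$) through their rationality. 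The cleanest way I know to organize all of this is Kato's \cite{Kato}: lift $\pi$ to a morphism of semi-infinite flag manifolds $\mathbf{Q}_G\to\mathbf{Q}_G^{P}$ whose fibres are again rational, realize $QK(F\ell_n)$ and $QK(G/P)$ --- after inverting all the $q_i$ --- inside the corresponding (equivariant) $K$-groups, and obtain $\pi_*$ there as an honest geometric pushforward, for which surjectivity and the projection (hence homomorphism) property are built in; one then descends to the unlocalized rings using that the quantum product is polynomial in $q$ \cite{ACT}, so that $QK(F\ell_n)$ and $QK(G/P)$ embed into their localizations. The Chevalley route sketched above should work too, but it trades the semi-infinite geometry for a long verification of \eqref{piChev}.
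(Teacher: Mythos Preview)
The paper does not prove this proposition at all: it is quoted verbatim as Theorem~A of \cite{Kato} and used as a black box in Example~\ref{exevidence}. So there is no ``paper's own proof'' to compare against; the paper's treatment is simply the citation.

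Your proposal is an honest account of what a proof would require, and it lands in the right place. The reduction to the Chevalley identities \eqref{piChev} via generation of $QK(F\ell_n)$ by the $\mathcal{O}^{s_i}$ is a reasonable organizing step (the generation itself needs an argument --- there is no grading to run Nakayama on --- but it follows from the explicit $QK$ Chevalley formula by induction on length). You then correctly isolate the real obstacle: verifying \eqref{piChev} amounts to a $K$-theoretic Peterson--Woodward comparison, and in the absence of the degree axiom one cannot simply mimic the cohomological filtration argument of Section~2. Your summary of Kato's route --- lift to semi-infinite flag varieties, realize $\pi_*$ as an honest pushforward there, and descend using polynomiality \cite{ACT} --- is accurate, and is precisely why the paper cites \cite{Kato} rather than attempting a self-contained proof. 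In short: your proposal matches what the paper does (defer to Kato), while adding a useful sketch of why a direct combinatorial attack is unattractive.
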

    For    $QK(G/B)$, we abbreviate the Schubert class    $\mathcal{O}^u=\mathcal{O}^u_B$.

    \begin{example} Consider    $QK(F\ell_4)$ and  let $u=s_{2}s_{3}s_{2}s_{1}$. By Conjecture    \ref{conjQK},
    $\mathcal{T}^2(\mathcal{O}^{u})=q_1q_2q_{3} \mathcal{T}(\mathcal{O}^{s_3})=q_1q_2q_{3}^{2} \mathcal{O}^{s_1s_2}$, and    $$\mathcal{O}^{s_2 s_3} \ast \mathcal{O}^{s_1s_2}=\mathcal{O}^{s_2 s_3} \cdot \mathcal{O}^{s_1s_2}=\mathcal{O}^{s_2 s_3s_1s_2} + \mathcal{O}^{s_1 s_2 s_3 s_2}-\mathcal{O}^{s_2s_1s_3s_2s_3},$$    where the last equality is obtained by Pieri's rule for the classical $K$-theory  \cite{LeSo}. As a consequence,
    \begin{align*}
          \mathcal{O}^{s_2s_3}\ast \mathcal{O}^{u}
         &=q_1^{-1}q_{2}^{-2}q_{3}^{-3} q_1q_2q_3^2 \mathcal{T}^2(\mathcal{O}^{s_2s_3} \cdot \mathcal{O}^{s_1s_2}) \\
         &=q_{2}^{-1}q_{3}^{-1} \mathcal{T}^2(\mathcal{O}^{s_2 s_3s_1s_2} + \mathcal{O}^{s_1 s_2 s_3 s_2}-\mathcal{O}^{s_2s_1s_3s_2s_3})  \\
         &= q_{2}^{-1}q_{3}^{-1}\mathcal{T}(q_2 q_3 \mathcal{O}^{s_3 s_2s_1}+ q_{2}q_3 \mathcal{O}^{ s_2 s_1 s_3}-q_{2}q_3 \mathcal{O}^{ s_3 s_2 s_1 s_3}) \\
         &=  {q_1q_2q_3\mathcal{O}^{\rm  id }+ q_3 \mathcal{O}^{ s_1 s_3s_2s_1 }}-q_1q_2q_3\mathcal{O}^{ s_3 }.
      \end{align*}
      \end{example}

   \begin{example}   \label{exevidence}    In    $QK(F\ell_6)$, for  $u=s_5 s_3 s_4 s_{1}s_{2}s_{3} s_2s_1$,  we have  $\mathcal{T}(\mathcal{O}^{u})=q_1q_2q_{3}q_4q_5 \mathcal{O}^{s_4s_2s_3 }$.  Based on   Conjecture    \ref{conjQK}    and the classical Pieri rule, we have
    \begin{align*}
          \mathcal{O}^{s_3s_4s_5}\ast \mathcal{O}^{u}
         &=q_1^{-1}q_{2}^{-2}q_{3}^{-3}q_4^{-4}q_5^{-5} q_1q_2q_3q_4q_5 \mathcal{T}^5(\mathcal{O}^{s_3s_4s_5} \cdot \mathcal{O}^{s_4s_2s_3}) \\
         &= q_{2}^{-1}q_{3}^{-2}q_4^{-3}q_5^{-4}\mathcal{T}^5 (\mathcal{O}^{s_1s_2 s_3s_4s_5s_3} +\mathcal{O}^{s_2s_3s_4s_5s_4s_3} +\mathcal{O}^{ s_2 s_3 s_4s_5s_2s_3} +\mathcal{O}^{s_3s_4s_5s_4s_2s_3}  \\
     &=  {q_1q_2q_3q_4q_5  \mathcal{O}^{s_3 }+  \mathcal{O}^{ s_1 s_2 s_3s_4s_5s_3s_4s_2s_3s_2s_1}+\mathcal{O}^{ s_1 s_2 s_3s_4s_5s_4s_1s_2s_3s_2s_1}+\mathcal{O}^{  s_2 s_3s_4s_5s_3s_4s_1s_2s_3s_2s_1}}  \\
         &\qquad -q_1q_{2}q_3q_4q_5 \mathcal{O}^{ s_4 s_3}
         - q_1q_2q_3q_4q_5\mathcal{O}^{ s_2s_3 }-2\mathcal{O}^{ s_1 s_2 s_3s_4s_5s_3s_4s_1s_2s_3s_2s_1}+ q_1q_2q_3q_4q_5\mathcal{O}^{ s_4s_2s_3 }.
      \end{align*}

    \ytableausetup{boxsize=0.3em}
     We can use Young tableaux to express the partitions for $Gr(3, 6)$. For example, the Young tableau ${ } \ydiagram{3,2,0}$ represents partition $(3, 2, 0)$, which corresponds to the Grassmannian type permutation $\overline{146235}$. By Proposition \ref{surj}, for $\pi_*: QK(F\ell_{6}) \longrightarrow  QK(Gr(3,6))$, we have
   \begin{align*}
       \mathcal{O}^{\ydiagram{1,0,0}} \ast \mathcal{O}^{\ydiagram{3,2,1}}&=\mathcal{O}^{s_3}_P \ast \mathcal{O}^{s_5s_3s_4s_1s_2s_3}_P \\
       &=\pi_*(\mathcal{O}^{s_3s_4s_5}) \ast \pi_*(\mathcal{O}^u) \\
        &=\pi_*(\mathcal{O}^{s_3s_4s_5}  \ast \mathcal{O}^u) \\
        &=\pi_*(q_1q_2q_3q_4q_5  \mathcal{O}^{s_3 }+  \mathcal{O}^{ s_1 s_2 s_3s_4s_5s_3s_4s_2s_3s_2s_1}+\mathcal{O}^{ s_1 s_2 s_3s_4s_5s_4s_1s_2s_3s_2s_1}+\mathcal{O}^{  s_2 s_3s_4s_5s_3s_4s_1s_2s_3s_2s_1}  \\
         &\qquad -q_1q_{2}q_3q_4q_5 \mathcal{O}^{ s_4 s_3}
         - q_1q_2q_3q_4q_5\mathcal{O}^{ s_2s_3 }-2\mathcal{O}^{ s_1 s_2 s_3s_4s_5s_3s_4s_1s_2s_3s_2s_1}+ q_1q_2q_3q_4q_5\mathcal{O}^{ s_4s_2s_3 }) \\
         &=q_3  \mathcal{O}^{s_3 }_P+  \mathcal{O}^{ s_1 s_2 s_3s_4s_5s_3s_4s_2s_3}_P+\mathcal{O}^{ s_1 s_2 s_3s_4s_5s_4s_1s_2s_3}_P+\mathcal{O}^{  s_2 s_3s_4s_5s_3s_4s_1s_2s_3}_P  \\
         &\qquad -q_3 \mathcal{O}^{ s_4 s_3}_P
         - q_3\mathcal{O}^{ s_2s_3 }_P-2\mathcal{O}^{ s_1 s_2 s_3s_4s_5s_3s_4s_1s_2s_3}_P+ q_3\mathcal{O}^{ s_4s_2s_3 }_P \\
         &=q_3  \mathcal{O}^{\ydiagram{1,0,0} }+ \mathcal{O}^{ \ydiagram{3,3,1}}+\mathcal{O}^{ \ydiagram{3,2,2} }+\mathcal{O}^{ \ydiagram{3,3,2}}
         -q_3 \mathcal{O}^{ \ydiagram{2,0,0}}
         - q_3 \mathcal{O}^{ \ydiagram{1,1,0} }-2\mathcal{O}^{ \ydiagram{3,3,2}}+ q_3\mathcal{O}^{ \ydiagram{2,1,0} }.
      \end{align*}    Abbreviate the quantum variable of  $QK(Gr(3, 6))$   as    $q=q_3$,  we have
    \begin{align*}
      \mathcal{O}^{\ydiagram{1,0,0}} \ast \mathcal{O}^{\ydiagram{3,2,1}}    &=q \mathcal{O}^{\ydiagram{1,0,0}}-q\mathcal{O}^{\ydiagram{2,0,0}}-q \mathcal{O}^{\ydiagram{1,1,0}}+q\mathcal{O}^{\ydiagram{2,1,0}}+\mathcal{O}^{\ydiagram{3,2,2}}
         +\mathcal{O}^{\ydiagram{3,3,1}}-\mathcal{O}^{\ydiagram{3,3,2}},
      \end{align*}   which  is consistent with the calculation   obtained by using the quantum Pieri rule for    $QK(Gr(3, 6))$    in    \cite{BuMi}. This can be seen as an evidence of Conjecture    \ref{conjQK}.  \end{example}

We end this section with a question similar to   Problem \ref{problemQH}.
   \begin{problem}   \label{problemQK}    For    $u\in S_{n}, 1 \leq i < j <n-1$, in    $QK(F\ell_n)$,  what are the necessary and sufficient conditions for    $\mathcal{O}^{s_i s_{i+1} \cdots s_{j}}*\mathcal{O}^{u}=\mathcal{O}^{s_i s_{i+1} \cdots s_{j}}\cdot \mathcal{O}^{u}$    (in terms of combinatorial information of  $u$)? Is it consistent with the necessary and sufficient conditions at the quantum cohomology level?
   \end{problem}

\end{document}